\newtheorem{Theorem}{Theorem}[section]
\newtheorem{proposition}[Theorem]{Proposition}
\newtheorem{corollary}[Theorem]{Corollary}
\newtheorem{definition}[Theorem]{Definition}
\newcommand{\Z}{\mathbb Z}
\newcommand{\A}{\mathcal{A}}
\newcommand{\B}{\mathcal{B}}
\title{Generating series of Periodic Parallelogram Polyominoes}
\author[J. C. Aval, A. Boussicault, P. Laborde-Zubieta, M. P\'etr\'eolle]{Jean-Christophe Aval, Adrien Boussicault, Patxi Laborde-Zubieta, Mathias P\'etr\'eolle }
\address{LaBRI - Universit\'e de Bordeaux, 351 cours de la Lib\'eration F-33405 Talence cedex}
\keywords{Dyck paths, Polyominoes, Trees}
\begin{document}
\maketitle

\begin{abstract}
The aim of this work is the study of the class of periodic parallelogram polyominoes, and two of its variantes. These objets are related to 321-avoiding affine permutations. We first provide a bijection with the set of triangles under Dyck paths. We then prove the ultimate periodicity of the generating series of our objects, and introduced a notion of primitive polyominoes, which we enumerate. We conclude by an asymptotic analysis.

\end{abstract}

\section{Introduction}

The study of polyominoes is very classical in combinatorics.
Many classes of these objects have been considered and studied in the past decades.
The focus of the present work is the class of {\em periodic parallelogram polyominoes}
(nicknamed as PPP's), which may be seen as parallelogram polyominoes drawn on a cylinder.
These objects were introduced simultaneously and independently in \cite{BBMJN} and \cite{BLZ} .
Their introduction in \cite{BBMJN} is linked to the study of
the bivariate generating series of 321-avoiding affine permutations with respect to the rank and the Coxeter length (or equivalently, to the study of fully commutative elements with full support in Coxeter groups of type $\widetilde{A}_{n-1}$); 
PPP's are enumerated in this paper  through the use of heaps of segments (extending the case of parallelogram polyominoes).
In the present work, which is intended as a sequel to \cite{BLZ},
we use a tree structure (inspired by \cite{BRS}) to study PPP's.
This structure puts to light a new parameter,
called {\em intrinsic  thickness} (see Definition \ref{def:it}).
We give here several answers raised in \cite{BLZ}.
In Section \ref{sec:bijPPP}, we give a bijective explanation
to a fact noticed and proved in \cite{BLZ}: the number
of PPP's with fixed intrinsic thickness according to their semi-perimeter coincides with the sequence {\tt A008549} in \cite{oeis}, which gives the total area under Dyck paths.
In Section \ref{sec:periodic}, we study the area parameter in PPP's and
we prove a periodicity property for the coefficients of the corresponding generating series.
Moreover, this leads to the introduction of a notion of {\em primitive} PPP's,
for which we obtain a very simple enumerative formula (see Theorem~ \ref{thmprimitive} -
a bijective proof is still to be found).
To conclude, we give in Section \ref{sec:asymptotic} an asymptotic estimate of the coefficients of the generating function of {\em strips} 
(these objects, introduced in  \cite{BLZ}, may be defined as orbits of PPP's
under the rotation of the cylinder), according to their semi-perimeter.

\section{Preliminaries}

First of all, we recall the main definitions and results from \cite{BLZ} we
will need in this article. The definition of periodic parallelogram polyominoes
is based on \emph{parallelogram polyominoes}. They can be seen as a maximal
set of cells of $\Z\times\Z$ defined up to translation, contained in between
two paths with North and East steps that intersect only at their starting and
ending points. In the following, the \emph{first} column will correspond to the
leftmost one, and the \emph{last} column to the rightmost one.

\begin{definition}[PPP]
    A \emph{periodic parallelogram polyomino} is a parallelogram polyomino $P$
    with a positive integer $g$ called the \emph{gluing size}, such that $g$ is
    smaller or equal to the minimum of the heights of the first and last column
    of $P$. Moreover, in the rest of the article we consider PPP's which are
    not of rectangular shape with gluing size equal to the size of the columns.
\end{definition}

We represent the integer $g$ of a PPP with a marking in the leftmost and
rightmost columns. This marking indicates how we glue the first and the last
columns, as we can see in Figure~\ref{fig_PPP}. In the following, two rows
glued together count as the same row. Let us define some statistics about
PPP's, the number of column is called \emph{width}, the number of rows, i.e.
the number of rows strictly below the marking of the last column, is called the
\emph{height}, the \emph{semi-perimeter} corresponds to the sum of the height
and the width, and the number of cells is the \emph{area}. For example, the
PPP in Figure~\ref{fig_PPP} is of height 5, width 8, semi-perimeter 13 and
area 26.

\begin{figure}[ht]
    \begin{center}
        \includegraphics[scale=.35]{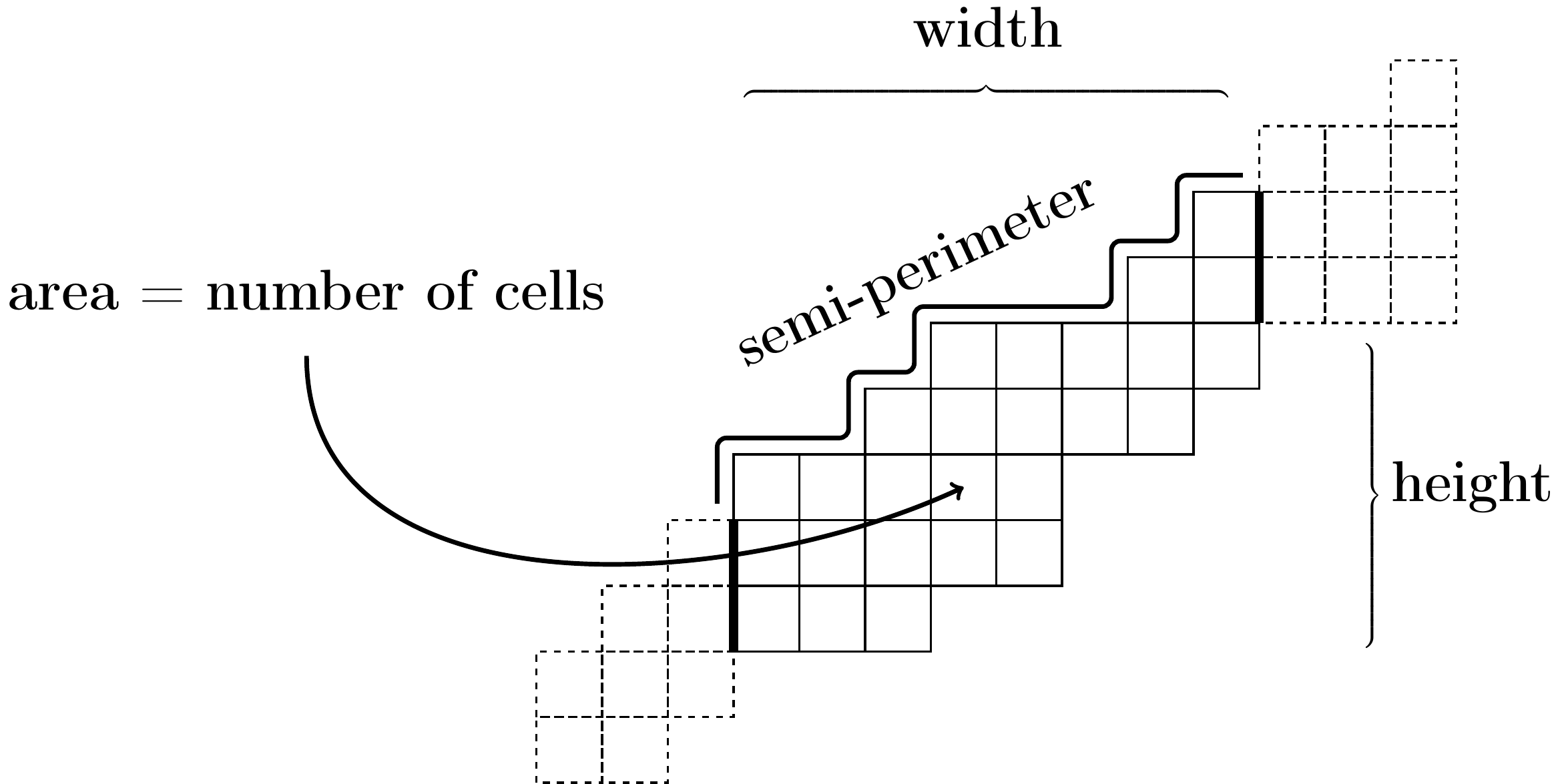}
    \end{center}
    \caption{A PPP and the illustration of the gluing (dotted).}
    \label{fig_PPP}
\end{figure}

As introduced in \cite{BLZ}, for each PPP $P$, we construct a rooted map
$\varphi(P)$ as follows. The vertices correspond to the top cell of each column
(column vertices) and the rightmost cell of each row (row vertices). We put an
edge between each vertex and its parent: if a vertex is column vertex, its parent is
the row vertex which belongs to the same row, and the parent of a row vertex is
the column vertex which belongs to the same column. The connected components of
the graph obtained this way, have exactly one cycle. We will now embed the
graph in the plane. If we orient each edge of the cycles from the child to its
parent, we embed the cycles in the plane clockwise. Moreover, we order counter
clockwise the children of a vertex $v$ with respect to their distance with $v$
starting with the closest one, the father of $v$ being between the two extremal
children. Finally, we root the map in the column vertex corresponding to the
first column of $P$. An example is given in Figure~\ref{fig:phi}. It should be
noted that the cycles of $\varphi(P)$ have the same even length, moreover,
since there is an alternation between column vertices and row vertices, we can
bicolor $\varphi(P)$ in black (column vertices) and white (row vertices).

\begin{figure}[ht]
    \begin{center}
        \includegraphics[scale=0.25]{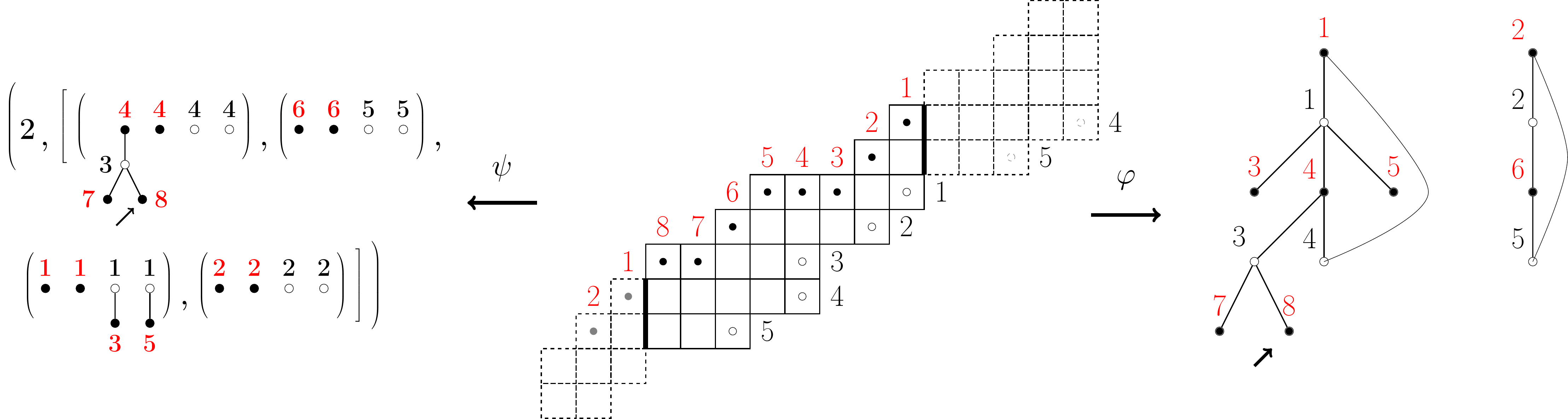}
    \end{center}
    \caption{The maps $\varphi$ and $\psi$.}

    \label{fig:phi}
\end{figure}

\begin{definition}
    A PPP $P$ is called a \emph{trunk PPP} if $\varphi(P)$ is a disjoint
    union of cycles. Trunk PPP's are the ones such that the upper path is of
    the form $N^k(NE)^l$, the lower path is of the form $(EN)^lN^k$ and the
    gluing size is equal to $k$, with $l$ and $k$ two positive integers.
\end{definition}
Let $P$ be a PPP, the leaves of $\varphi(P)$ correspond to the column or the
rows of $P$ containing only one vertex. By removing recursively the rows and
the columns corresponding to leaves, we obtain a trunk PPP noted $trunk(P)$.
\begin{definition}\label{def:it}
    We call \emph{intrinsic thickness} of a PPP $P$ the gluing size of
    $trunk(P)$.
\end{definition}

If we label the row vertices and the column vertices of a trunk PPP $P$ both
from 1 to $l$, we can enrich $\varphi(P)$ with this labelling, we call it the
\emph{cyclic structure} of $P$. For a general PPP $P$, its cyclic structure
corresponds to the cyclic structure of $trunk(P)$.

The fact that two trunk PPP's can have the same image with respect to
$\varphi$, shows that $\varphi$ is not injective. But if we only consider
PPP's of intrinsic thickness equal to one, we have the following result.
\begin{proposition}\label{prop:bij_it_1}
    The map $\varphi$ gives a bijection between PPP's of intrinsic thickness
    equal to one and connected rooted maps containing exactly one cycle of even length.
\end{proposition}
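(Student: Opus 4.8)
The plan is to construct an explicit inverse to $\varphi$ on the relevant classes. First I would start with a connected rooted map $M$ with exactly one cycle $C$ of even length $2l$, bicolored black/white along the cycle (the root vertex colored black), and recover the combinatorial data of a PPP of intrinsic thickness one. The cycle $C$ has $l$ black vertices and $l$ white vertices; these will be the column vertices and row vertices of the trunk, and by the description of trunk PPP's (upper path $N^k(NE)^l$, lower path $(EN)^lN^k$, gluing size $k$) with intrinsic thickness one we must take $k=1$, so $trunk(M)$ is forced. The point is then that the subtrees hanging off the cycle vertices, together with the cyclic/planar embedding data, tell us exactly how to ``grow'' the full PPP back from its trunk by reinserting rows and columns corresponding to leaves.

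The key steps, in order, are: (1) observe that removing the cycle from $M$ leaves a forest of rooted (plane) trees, one attached at each vertex of $C$, and each such tree is alternately bicolored with its root inheriting the color of its attachment point; (2) set up a recursive ``insertion'' operation that is inverse to the leaf-removal defining $trunk(P)$: given a PPP $Q$ and a designated column (resp. row) vertex $v$ of $\varphi(Q)$ together with a position among the children of $v$, insert a new row (resp. column) of the appropriate length adjacent to the one coded by $v$, so that in the map a new leaf child of $v$ is created in the prescribed position; (3) check that this insertion is well-defined on PPP's (the parallelogram condition and the gluing-size condition are preserved — intrinsic thickness stays one because we only ever add leaves, never touch the trunk), and that it exactly undoes one step of the trunk-reduction; (4) iterate over the forest in $M$, processing vertices from the cycle outward, so that the planar order of children dictates the geometric order in which rows/columns are inserted; (5) verify that $\varphi$ applied to the resulting PPP returns $M$ with its embedding and root, using the fact (already noted in the excerpt) that the embedding of $\varphi(P)$ is completely determined by the clockwise orientation of the cycle and the counterclockwise distance-order of children around each vertex — so the planar structure we read off $M$ is precisely what $\varphi$ reproduces.

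Finally I would argue the two maps are mutually inverse. In one direction, starting from a PPP $P$ of intrinsic thickness one: $\varphi(P)$ is connected with a single even cycle by the structural facts recalled before the statement, so $\varphi$ does land in the claimed codomain; applying the reconstruction to $\varphi(P)$ recovers $P$ because the reconstruction reverses leaf-removal step by step and $trunk(P)$ is pinned down by the cycle together with the constraint that the gluing size of the trunk equals one. In the other direction, starting from an admissible map $M$: the reconstruction yields a PPP $P$ with $trunk(P)$ determined by the cycle (hence intrinsic thickness one), and $\varphi(P)=M$ by step (5). The main obstacle I anticipate is step (5) — being careful that the \emph{planar embedding and the choice of root} match on the nose, not just the underlying graph. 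Concretely, one has to match the counterclockwise ordering of children by distance used in the definition of $\varphi$ with the order in which the insertion operation places new rows/columns, and to confirm that rooting at the first-column vertex of $P$ corresponds to the given root of $M$; this requires a precise bookkeeping lemma about how inserting a leaf row/column shifts the geometric positions (and hence the graph-distances) of the already-present cells, which is where the real content of the argument lies.
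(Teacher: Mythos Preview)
Your proposal is correct and follows essentially the same strategy as the paper's proof: first pin down the trunk from the cycle (forcing gluing size $k=1$), then invert the leaf-removal by reinserting rows/columns according to the attached trees, and observe that the bicoloring is forced by the root. The paper's own argument is much terser because it simply invokes the ``pruning'' operation already developed in \cite[Section~4]{BLZ} rather than spelling out your steps (2)--(5), but the content is the same.
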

\begin{proof}
    We just need to notice that $\varphi$ gives a bijection between trunk
    PPP's of intrinsic thickness equal to 1 and cycles of even size, and use
    the pruning define in \cite[Section~4]{BLZ}. The bicoloring of the rooted
    map is not necessary since coloring in black the root induces the coloring
    of the rest of the map.
\end{proof}

In the general case we have the following result \cite[Theorem~7.1]{BLZ}.
\begin{Theorem}\label{thm:bijquadruplet}
    The PPP's are in bijection with pairs composed of a positive integer (the
    intrinsic thickness) and a non empty list of 4-tuples of bicolored ordered
    trees such that:

$\bullet$ each 4-tuple is composed of two black rooted trees and two white
            rooted trees,

$\bullet$ in the first 4-tuple we mark a non-root black vertex or the two
            black roots.\\
We denote $\psi$ this bijection. In the rest of the article, the expression ``a
list of 4-tuples of trees" should be understood as a non empty list of 4-tuples
satisfying the previous conditions. 
\end{Theorem}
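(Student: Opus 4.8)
The plan is to build the bijection $\psi$ as a composite of three steps, the first two of which amount to running, with extra bookkeeping, the argument behind Proposition~\ref{prop:bij_it_1}. \textbf{Step 1 (unfold and prune).} Given a PPP $P$, form the bicolored rooted map $\varphi(P)$, each component of which carries exactly one even cycle, and delete its leaves recursively; by the pruning of \cite[Section~4]{BLZ} this produces $\varphi(trunk(P))$ together with, hanging from each vertex of the cyclic structure of $trunk(P)$, the ordered tree of the material removed there, and this operation is a bijection from PPP's onto such decorated cyclic structures. The one point that needs care beyond \cite{BLZ} is that the gluing size $g$ of $P$ must be retained separately: pruning replaces it by the gluing size of $trunk(P)$, which is forced to equal the intrinsic thickness $k$ (Definition~\ref{def:it}), and for a general PPP $g\neq k$.

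\textbf{Step 2 (repackage into $4$-tuples).} A vertex $v$ on a cycle is met by exactly two cycle edges, which cut the cyclically ordered list of subtrees hanging from $v$ into two ordered forests; adjoining to each a new root of the colour of $v$ yields two monochromatic bicolored ordered trees, one for each side of the cycle at $v$. Traversing the cycle vertices in the order fixed by the rooted embedding --- start at the root, go round its component clockwise, then pass to the next component, and so on --- the colours alternate, and pairing each black cycle vertex with the white cycle vertex that follows it groups the associated forests into a $4$-tuple of two black-rooted and two white-rooted trees. The list of these $4$-tuples has length equal to the number $l$ of black cycle vertices (half the total length of the cycles), so $l$ is read back from the list, and the list is non-empty since $l\geq 1$ for every trunk PPP.

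\textbf{Step 3 (restore the trunk and the gluing).} Since, as the excerpt recalls, $\varphi$ fails to be injective on trunk PPP's, $k$ cannot be recovered from $\varphi(trunk(P))$; this is precisely why $k$ is supplied as the distinguished positive integer. From $k$ and $l$ one rebuilds $trunk(P)$ and $\varphi(trunk(P))$, grafts the trees of the $4$-tuples back onto the corresponding cycle vertices to recover $\varphi(P)$, and applies $\varphi^{-1}$ (available through Proposition~\ref{prop:bij_it_1} when $k=1$, the map being then connected, and in general through the structure of trunk PPP's). Finally $g$ is restored from the marking of the first $4$-tuple, whose two black-rooted trees are the two forests issued from the first column of $P$: marking a non-root black vertex there pins down the cell of the first column at which the cylinder closes, while ``the two black roots'' is the extremal case $g=k$, which is also the only marking available for a trunk PPP, since there all the decorating trees are trivial and no non-root black vertex exists. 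Reversing the three steps gives $\psi$.

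The hard part will be Step 3, and more precisely the verification that, as the trees of the first $4$-tuple vary, the choice ``a non-root black vertex or the two black roots'' ranges over exactly the admissible gluing sizes of the reconstructed PPP --- with the excluded rectangular case (gluing size equal to the column height) corresponding to no admissible marking --- and that this is compatible with the grafting of Step 2. The remaining ingredients --- the forest-to-tree bookkeeping, the traversal order of the cycles, and the reconstruction of a trunk PPP from $(k,l)$ --- are routine once the pruning of \cite{BLZ} and the embedding conventions for $\varphi$ are in hand.
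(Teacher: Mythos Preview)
First, a framing remark: the paper does not give its own proof of this theorem. It is quoted as \cite[Theorem~7.1]{BLZ}, and the only thing the present paper adds is the short paragraph after the statement explaining that the four trees of each tuple are the inner and outer trees rooted at two consecutive cycle vertices of $\varphi(P)$. So there is no detailed argument here to compare against; what one can check is whether your sketch is consistent with that description and with the way $\psi$ is used later (e.g.\ in the proof of Theorem~\ref{thmprimitive}).

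Your Steps~1 and~2 are fine and match the paper's description: prune to the cycles, remember the ordered forests on either side of each cycle vertex, and package consecutive black/white cycle vertices into $4$-tuples. The problem is Step~3, and more precisely your reading of what the marking encodes.

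You assert in Step~1 that the gluing size $g$ of $P$ ``must be retained separately'' because pruning replaces it by $k$, and in Step~3 that the marking in the first $4$-tuple is what restores $g$. Both claims are off. The gluing size $g$ is already encoded in the rooted map $\varphi(P)$: changing $g$ changes which rows are identified under the gluing, hence changes the row vertices and the parent relations, hence changes $\varphi(P)$ itself. For intrinsic thickness $1$ this is exactly Proposition~\ref{prop:bij_it_1}: $\varphi$ alone recovers $P$, including $g$. In general the pair $(k,\varphi(P))$ recovers $P$; nothing about $g$ needs to be carried on the side.

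What the marking actually records is the \emph{root} of $\varphi(P)$, i.e.\ which black vertex is the column vertex of the first column of $P$. Once the cycles are decorated by the four trees per pair of consecutive cycle vertices, the structure is an unrooted bicolored map; to rebuild $\varphi(P)$ one must say which black vertex is the root. One declares the ``first'' $4$-tuple to be the one whose trees contain this root. If the root is the black cycle vertex itself, that vertex is split into the two black tree roots (inner and outer), whence the single option ``the two black roots''. Otherwise the root sits as a non-root black vertex in one of the four trees of that tuple, and one marks it there. Note in particular that a black vertex is a \emph{column} of $P$, not a cell in a column, so your phrase ``pins down the cell of the first column at which the cylinder closes'' does not type-check: marking a black vertex cannot select a height within a column.

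A secondary slip follows from this: you write that the two black-rooted trees of the first $4$-tuple are ``the two forests issued from the first column of $P$''. That presupposes the first column of $P$ is a cycle vertex of $\varphi(P)$, i.e.\ survives to $trunk(P)$, which is not true in general; when it does not, the root lies strictly inside one of the four trees, and that is exactly the ``non-root black vertex'' case of the marking.

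In short: your decomposition is the right one, but the bookkeeping datum you lose in passing from $\varphi(P)$ to the list of $4$-tuples is the root, not the gluing size, and the marking repairs the former, not the latter.
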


Let $P$ be a PPP, the vertices of $trunk(P)$ correspond exactly to the vertices
of $\varphi(P)$ which compose the cycles. More precisely, the column vertex and
the row vertex contained in a same column correspond to two consecutive
vertices in a cycle of $\varphi(P)$. In the previous bijection, the four trees
of each tuple correspond to the trees rooted in each two consecutive cycle
vertices of $\varphi(P)$, two of them belong to the inner face of the cycle,
and the other two, to the outer face. An example of $\psi$ is given in
Figure~\ref{fig:phi}.

We will also deal with two other objects derived from PPP's. The marked PPP's
are in bijection with the fully commutative affine permutations (\cite{BBMJN}),
that is, affine permutations avoiding 321. Regarding strips, they appeared in
the study of PPP's in \cite{BLZ}.
\begin{definition}
    A \emph{marked PPP} is a PPP with a marking, in one of its horizontal edges
    of the first column which are above the gluing, including the one at the
    top of the gluing.

    Due to the periodic structure of PPP's, we can define a rotation on PPP's
    which induces a partitioning in equivalent classes called \emph{strips}.
\end{definition}
For example, there are 2 possibilities to mark the PPP of Figure~\ref{fig_PPP}.

\section{Bijection with the set of triangles under Dyck paths}\label{sec:bijPPP}

The aim of this section is to provide a bijective explanation for the following result,
proved analytically in \cite{BLZ}.

\begin{proposition}
\label{pop_dyck_area}
The number of PPP's with intrinsic thickness fixed to 1 and half-perimeter equal to $n$
is given by $4^{n-1} - {2n-1 \choose n-1}$, which may be interpreted (\cite{oeis})
as the total (triangular) area under Dyck paths of size $n-1$.
\end{proposition}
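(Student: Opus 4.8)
The plan is to reduce the statement, via Proposition~\ref{prop:bij_it_1}, to a count of rooted maps, and then to set up a bijection between those maps and triangles under Dyck paths. First, note that for a PPP $P$ the map $\varphi(P)$ has exactly one black vertex per column (so $\mathrm{width}(P)$ of them) and exactly one white vertex per row (so $\mathrm{height}(P)$ of them); hence $\varphi(P)$ has $n=\mathrm{width}(P)+\mathrm{height}(P)$ vertices, i.e. exactly the semi\nobreakdash-perimeter. By Proposition~\ref{prop:bij_it_1}, it therefore suffices to prove that the number of connected rooted maps on $n$ vertices having exactly one cycle — necessarily of even length, since the two colours alternate along it — equals $4^{n-1}-\binom{2n-1}{n-1}$, and to do so bijectively by matching such maps with pairs $(D,t)$, where $D$ is a Dyck path of size $n-1$ and $t$ is a unit triangle of the triangular grid lying below $D$. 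This last set is exactly the ``total triangular area under Dyck paths of size $n-1$'' quoted from \cite{oeis}.

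The bijection I would build goes by cutting the cycle. Given a connected rooted map $M$ on $n$ vertices with a single (even) cycle $\gamma$, I would single out canonically one edge $e$ of $\gamma$ — the one incident to the vertex of $\gamma$ nearest the root, on the side prescribed by the clockwise embedding of $\gamma$ — and remove it. Since $M$ is connected and unicyclic, $M\setminus e$ is a rooted plane tree $T$ on $n$ vertices, which is classically encoded by a Dyck path $D$ of size $n-1$ (the contour word). The information lost when erasing $e$ is the position of the second endpoint of $e$ along the contour of $T$, together with one bit recording whether $e$ bounded the inner or the outer face of $\gamma$; the plan is to show that this data, read off against the height profile of $D$, is exactly a choice of a unit triangle below $D$ (the number of admissible reconnection points/faces at contour position $i$ matching the number of triangles stacked under $D$ there). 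The inverse map re‑adds the back‑edge in the prescribed face, which is forced, so the correspondence is a bijection.

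The main obstacle is the embedding bookkeeping: verifying that the canonical choice of cut edge is compatible with the inverse, and, more seriously, that planarity constrains the admissible pairs (contour position, face) to be in bijection with the triangles under $D$ — not every pair of corners of $T$ admits a non‑crossing back‑edge whose cycle has even length. I expect the cleanest way to make this precise is to route the argument through Theorem~\ref{thm:bijquadruplet} specialised to intrinsic thickness $1$: such a PPP of semi‑perimeter $n$ is a non‑empty list of $4$‑tuples of bicolored plane trees of total size $n$ carrying a distinguished non‑root black vertex (or the pair of black roots) in the first tuple. Concatenating the trees of the list into a single plane tree, hence a Dyck path of size $n-1$, and reading the distinguished vertex as a triangle under that path should yield the bijection, with the size accounting becoming routine once the concatenation rule is fixed. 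As a consistency check, and as a fallback if the bijection proves delicate, one can recover $4^{n-1}-\binom{2n-1}{n-1}$ non‑bijectively by computing the generating function of these tree‑lists, which is essentially the analytic route of \cite{BLZ}.
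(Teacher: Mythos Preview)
Your plan is essentially the paper's: reduce via Proposition~\ref{prop:bij_it_1} to the set $\B_n$ of rooted planar unicyclic maps on $n$ vertices with an even cycle, then biject these with the paper's intermediate set $\A_n$ of triples $(A,s,i)$ where $A$ is a plane tree on $n$ vertices, $s$ is a non-root vertex, and $1\le i\le 2p(s)-1$ with $p(s)$ the depth of $s$; the $2p(s)-1$ values of $i$ are precisely the unit triangles under the up-step of the contour Dyck path at $s$, which gives the link to {\tt A008549}.

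Where the paper's execution differs from your sketch is exactly at the ``embedding bookkeeping'' you flagged. First, the back-edge data is \emph{not} parametrised as ``contour position of the second endpoint plus a face bit''; instead the $2p(s)-1$ slots are the half-edges along the ancestor chain of $s$ (left and right at each proper ancestor, one slot at the root), labelled counterclockwise, with the parity of $i$ deciding whether the new edge leaves from $s$ itself or from its parent $s'$ --- this is what forces the cycle length to be even. Second, and more importantly, the bijection is not simply ``add/delete a back-edge'': after attaching the edge to the ancestor $t$, the paper \emph{cuts} the subtree of the root lying to the right of the branch towards $s$ and re-plugs it at $t$, on the side determined by the sign of $i-p(s)$. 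This transplant is what allows the root of an element of $\B_n$ to be an arbitrary vertex rather than one on the cycle--to--leaf spine, and it is the step your outline is missing. Correspondingly, in the inverse the paper cuts the cycle at its \emph{deepest} vertex (after suspending by $r$), not at the vertex nearest the root as you propose. Your alternative route through Theorem~\ref{thm:bijquadruplet} is not used.
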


\begin{definition}
Let $\A_n$ denote the set of triples $(A,s,i)$ where 

$\bullet$  $A$ is a (rooted) planar tree with $n$ vertices,

$\bullet$  $s$ is a vertex of $A$ different from its root,

$\bullet$  $i$ is an integer, $1\le i\le 2p(s)-1$ where $p(s)$ is the depth of vertex $s$.
\end{definition}
See Figure \ref{fig:bij1} for an illustration.

It is easy to see that the sequence of cardinalities of $\A_n$ coincides with  {\tt A008549} in \cite{oeis}.

\begin{definition}
Let $\B_n$ denote the set of couples $(B,r)$ where

$\bullet$  $B$ is a connected planar graph with $n$ vertices, having exactly one cycle of even length,

$\bullet$  $r$ is a vertex of $B$.
\end{definition}

Another way to present it is as follows.
An element of $\B_n$ is given by a (planar) cycle of length $2\ell$, 
with $4\ell$ planar trees attached (one for each vertex of the cycle, in the inner and outer face of the cycle); among the whole set of $n$ vertices, one is distinguished.
%See Figure \ref{fig:Br}.

Because of Proposition~\ref{prop:bij_it_1}, the set $\B_n$ is in bijection with PPP's of semi-perimeter $n$,
and whose intrinsic thickness is equal to 1.

\begin{proposition}\label{thm:bijPhi}
The sets $\A_n$ and $\B_n$ are in bijection.
\end{proposition}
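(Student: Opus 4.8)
The plan is to construct an explicit bijection, say $\Theta\colon \B_n\to\A_n$, together with its inverse. By Proposition~\ref{prop:bij_it_1} and the discussion after it, an element of $\B_n$ may be pictured geometrically: a planar cycle $c_1,\dots,c_{2\ell}$, carrying at each vertex one planar subtree in the inner face and one in the outer face, plus a single distinguished vertex $r$ among all $n$ vertices. The natural idea is to \emph{open the cycle} into the root-to-$s$ path of a planar tree, and to store in the integer $i$ exactly the information destroyed by this operation: along which edge the cycle was closed, where $r$ sat, and how the inner/outer subtrees were arranged along the path. Conversely, given $(A,s,i)\in\A_n$, one reads off the path $v_0=\rho,v_1,\dots,v_p=s$ from the root to $s$ together with the forests hanging to its left and to its right, and one \emph{closes it up} into an even cycle, the value $i\in\{1,\dots,2p-1\}$ prescribing how.

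Concretely, I would proceed in three stages. First, fix a dictionary between the cyclic datum of $B$ and a linear one: reading the cycle starting at (the cycle vertex nearest) $r$ turns $B$ into a chain of $2\ell$ vertices, each carrying its inner and outer subtrees; re-rooted at its first vertex, this chain is a planar tree with a distinguished deep vertex, and the inner/outer split becomes a left/right split along the root path. I would make this identification precise and bijective, being careful at the two endpoints of the path (where there is no further path-edge, so a convention must be chosen) and about the bicoloring, which, exactly as in Proposition~\ref{prop:bij_it_1}, is redundant once the root vertex is assigned a color. Second, I would account for the residual scalar data: on the $\A_n$ side there are $2p(s)-1$ values of $i$; on the $\B_n$ side, once the chain has been extracted, what remains is the position of $r$ and the choice among the parallel "closing" configurations, and one must check these are in bijection with $\{1,\dots,2p(s)-1\}$ — the factor essentially $2p(s)$ reflecting that a path with $p(s)$ edges has $2p(s)$ "sides" along which a cycle can be reglued. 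Third, I would verify that $\Theta$ and its stated inverse are mutually inverse, that $\Theta$ really lands in $\A_n$ (the produced vertex is never the root, the produced integer lies in $[1,2p(s)-1]$), and that closing up always yields a connected planar graph with exactly one cycle, of \emph{even} length.

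The main obstacle I anticipate is precisely this last piece of bookkeeping — making the counts match \emph{fiber by fiber}, not merely in total. Opening the unique cycle of $B$ naively produces a root path whose length is one less than the (even) cycle length, hence odd, whereas $\A_n$ contains triples with $p(s)$ of either parity; so the correspondence cannot simply equate "cycle length" with "path length", and the parameter $i$ must absorb this discrepancy. One therefore has to pin down exactly which positions along the root path of $A$ correspond to valid (even) cycles and confirm that they are counted with the correct multiplicity $2p(s)-1$. A secondary, more routine, difficulty is well-definedness in the presence of automorphisms of the cyclic structure; I would sidestep it by systematically using the rooted-map description of $\B_n$ provided by Proposition~\ref{prop:bij_it_1}, where the root kills every automorphism, rather than working with bare unrooted plane graphs. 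Throughout, comparing with the known value $4^{n-1}-\binom{2n-1}{n-1}$ for small $n$ would serve as a running consistency check.
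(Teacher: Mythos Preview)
Your high-level plan---open the unique cycle of $(B,r)$ into the root-to-$s$ path of a tree, with the integer $i$ recording how it was closed---is exactly the paper's approach, and your inverse sketch (suspend at $r$, walk towards the cycle, read off $s$) matches how the paper describes $\Phi^{-1}$. But two concrete devices are missing from your outline, and without them the construction does not go through.

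The first is the parity issue you yourself flag as the main obstacle and leave open. The paper resolves it as follows. Going forward from $(A,s,i)$, one labels the $2p(s)-1$ attachment half-edges along the ancestor chain of $s$ counterclockwise; for \emph{odd} $i$ the closing edge $e$ leaves from $s$, whereas for \emph{even} $i$ it leaves from the parent $s'$ of $s$, plugged just to the left of the edge $ss'$. This alternation between $s$ and $s'$ is precisely what forces the resulting cycle to have even length for every value of $p(s)$, and it is the one genuinely non-obvious trick in the proof. Your outline contains no mechanism of this kind; saying that ``$i$ must absorb this discrepancy'' is not yet a construction.

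The second is a regrafting step. After inserting $e$, the paper cuts off the subtree of the root of $A$ lying to the right of the edge towards $s$ and re-attaches it at the landing vertex $t$ of $e$ (just before or just after $e$ counterclockwise, according as $i>p(s)$ or $i<p(s)$); the distinguished vertex $r$ of $B$ is then the old root of $A$. This move is needed for bijectivity. Your stage~2 instead seems to want $i$ to carry ``the position of $r$'', but that cannot work: once $(A,s)$ is fixed, $i$ has only $2p(s)-1$ values, while $r$ ranges over all $n$ vertices of $B$. In the paper's construction $i$ encodes only the attachment point and side of the closing edge; the freedom in where $r$ sits comes entirely from the choice of root of $A$ together with the regrafting.
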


\begin{proof}

We shall construct a bijection $\Phi$ between $\A_n$ and $\B_n$.
Let $(A,s,i)$ be an element of $\A_n$ (Figure \ref{fig:bij1}).
We shall create a cycle by adding an edge from $s$ or from its parent (denoted $s'$) 
to one of its ancestor vertices $t$ ($t$ belongs to the path from the root to $s$).
This new edge $e$ will be fixed to $t$ either to its left or right when $t$ is different from the root,
with just one possibility when $t$ is the root: we thus have exactly $2p(s)-1$ possibilities, 
which we label by integers counterclockwise.
(see Figure \ref{fig:bij1}; on this example the vertex $s$ has depth equal to 4, whence 7 possibilities).

\begin{figure}[ht]
\begin{center}
   \includegraphics[scale=0.45]{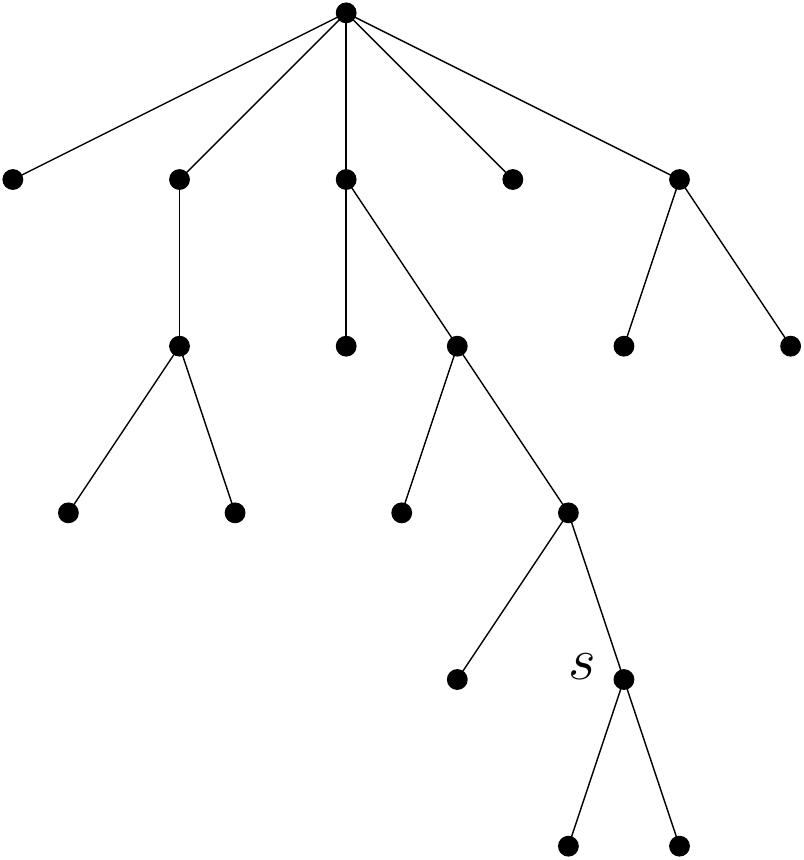}
   \hspace{1cm}
   \includegraphics[scale=0.45]{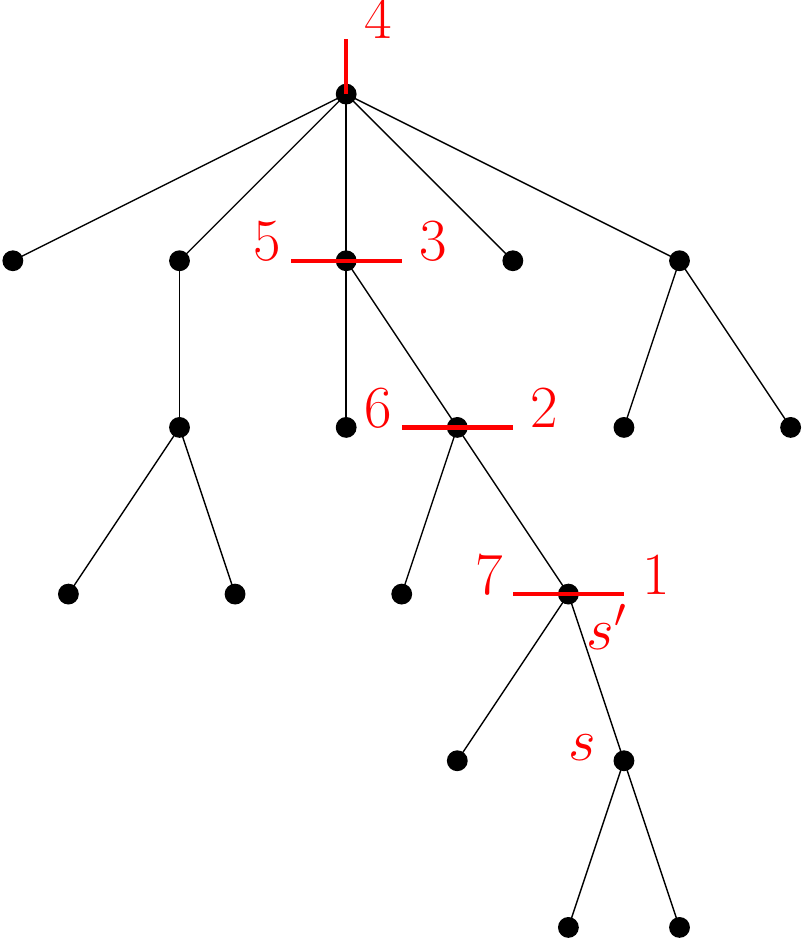}
\end{center}
\caption{An element of $\A_{19}$ and the labelling of the $2p(s)-1=7$ possibilities for $i$.}
\label{fig:bij1}
\end{figure}
We then distinguish two cases according to the integer $i$ being odd or even:

$1.$ if $i$ is odd, the edge $e$ goes from the right of $s$ to the half-edge labelled by $i$;

$2.$ if $i$ is even, $e$ goes from $s'$, plugged at the left of the edge of $s$, to the half-edge labelled by $i$.\\
In any case, the edge $e$ goes around the tree counterclockwise. A last step in
the construction consists in cutting the subtree of the root located at the
right of the edge going towards $s$ (i.e. from the root to its son which is an
ancestor of $s$), and to plug it on the vertex $t$:

$\bullet$  just after (counterclockwise) the edge $e$ if $i<p(s)$,

$\bullet$  just before (counterclockwise) the edge $e$ if $i>p(s)$\\
(we do nothing if $e$ is plugged on the root, i.e. for $i=p(s)$).
We then distinguish the root $r$.
We have built in this way the image $\Phi(A,s,i)$.
See Figures \ref{fig:bij2} ($i=6$) and \ref{fig:bij3} ($i=3$).
\begin{figure}[ht]
\begin{center}
   \includegraphics[scale=0.45]{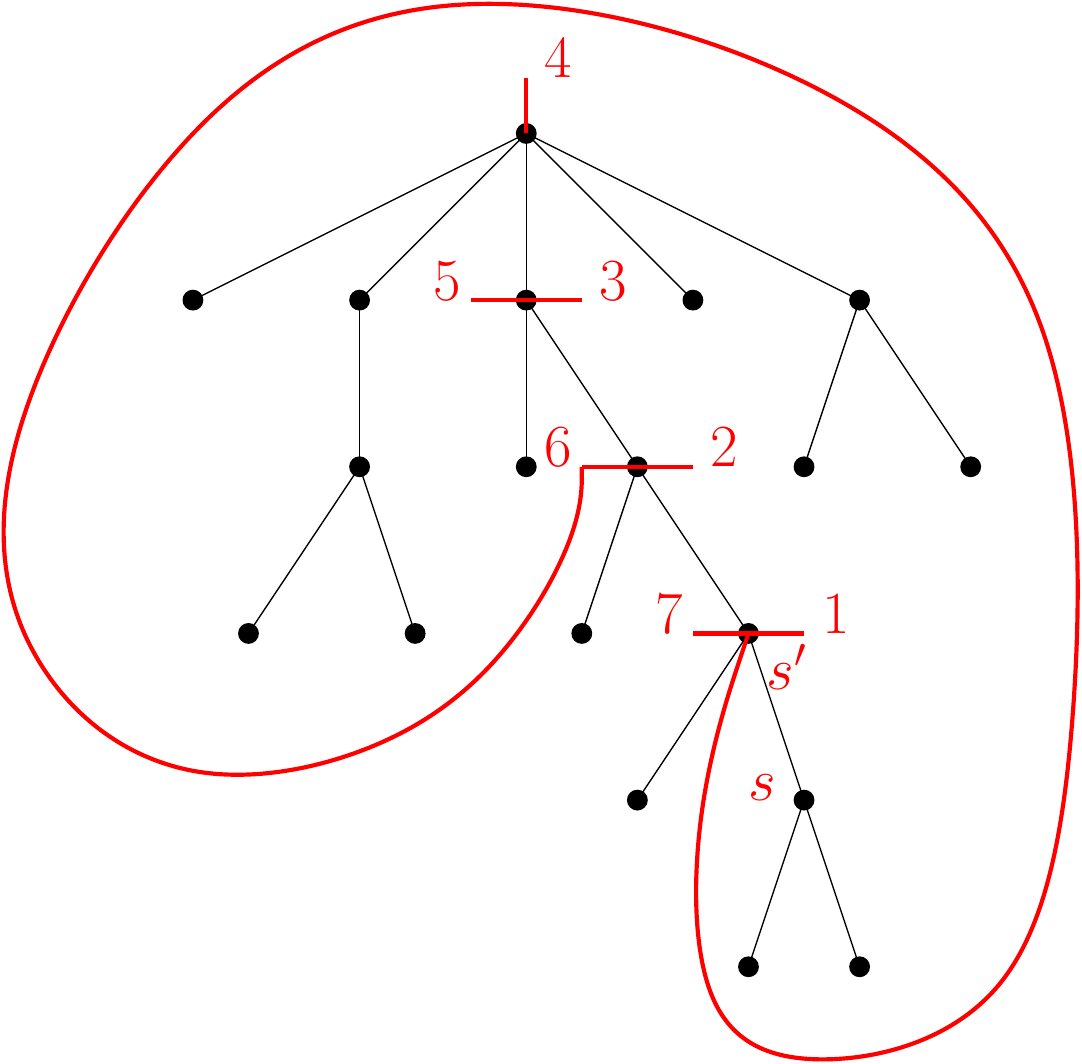}
   $\ \longrightarrow \ $
   \includegraphics[scale=0.45]{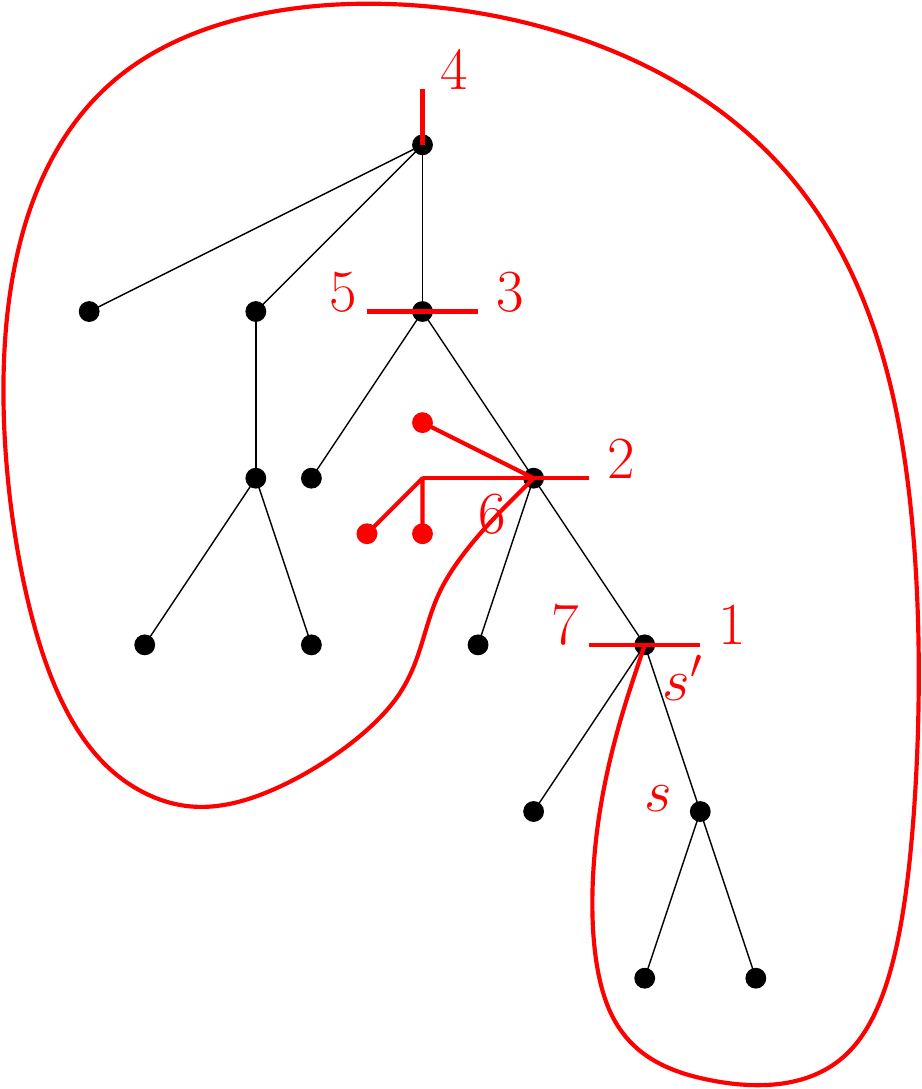}
   $\ \longrightarrow \ $
   \includegraphics[scale=0.45]{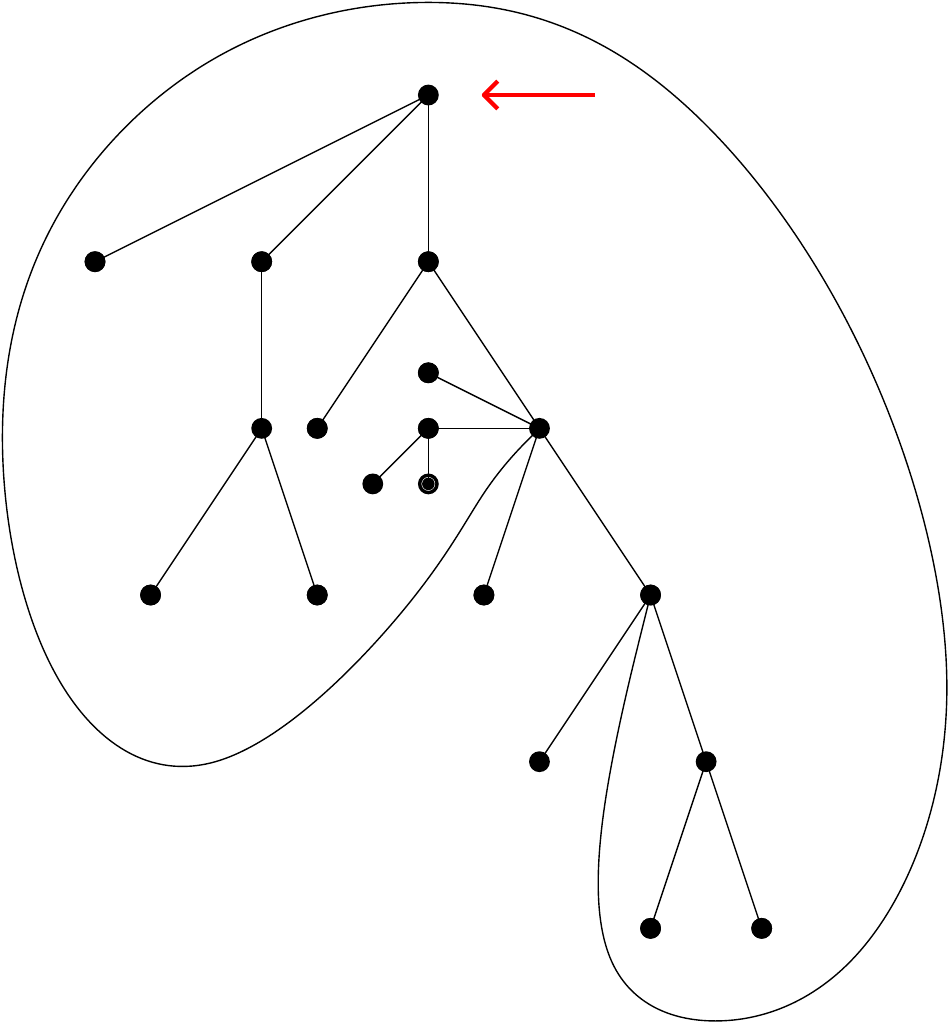}
\end{center}
\caption{Construction of $\Phi(A,s,6)$}
\label{fig:bij2}
\end{figure}

\begin{figure}[ht]
\begin{center}
   \includegraphics[scale=0.45]{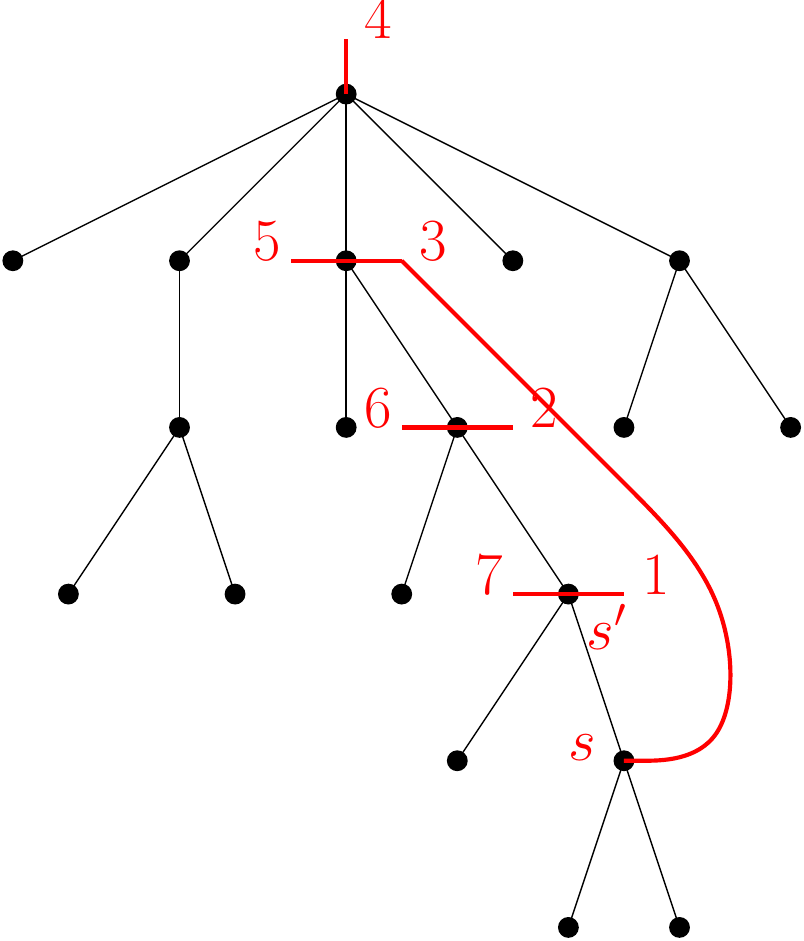}
   $\ \longrightarrow \ $
   \includegraphics[scale=0.45]{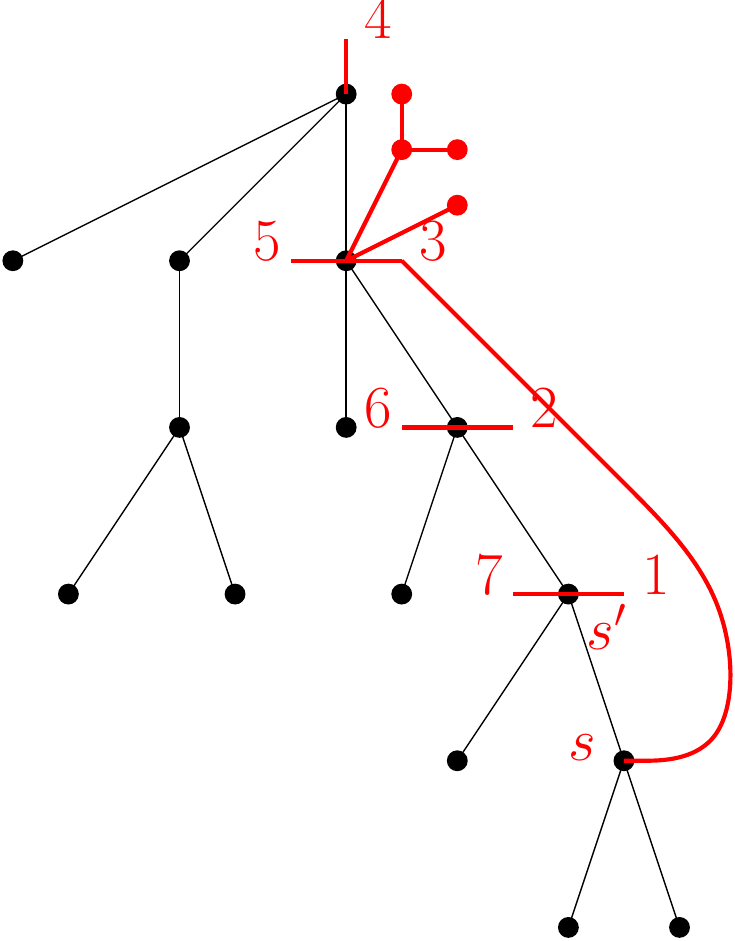}
   $\ \longrightarrow \ $
   \includegraphics[scale=0.45]{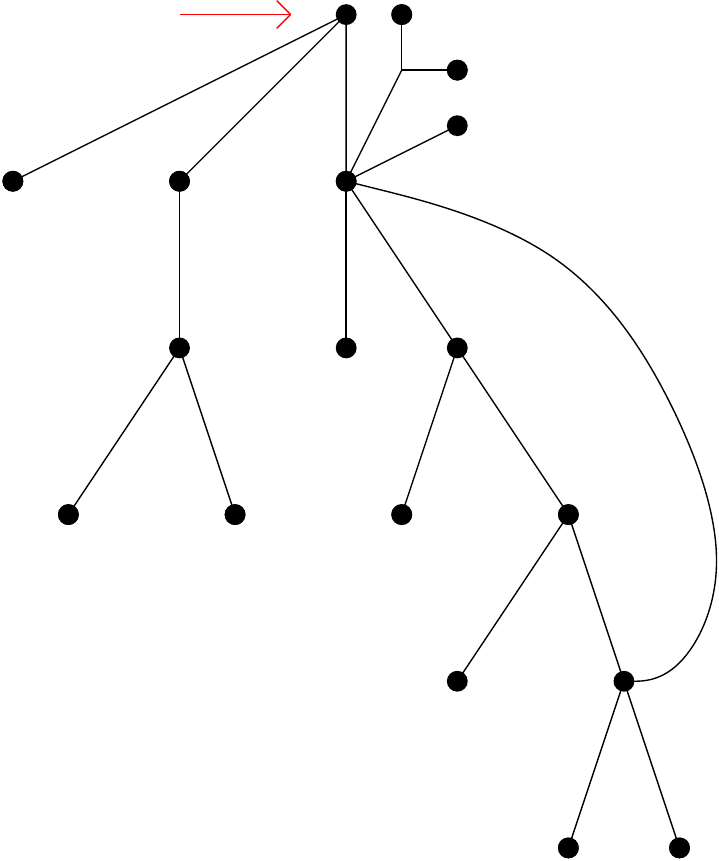}
\end{center}
\caption{Construction of $\Phi(A,s,3)$}
\label{fig:bij3}
\end{figure}

\bigskip

To show that $\Phi$ is a bijection, we shall construct the reverse bijection.
{\em This construction is only sketched below.}
Let us consider an element $(B,r)$ of $\B_n$.
We first ``suspend'' it by the (future) root $r$. 
More formally, each vertex may be assigned a depth through its distance from $r$; this is non ambiguous since the cycle is planar: we follow its edges counterclockwise.
When doing this, we place its son which leads to the cycle to the right.

Then we get back the right subtree of $r$ (two cases are to be distinguished according 
to whether $r$ lies in the inner or outer face of the cycle).
Next we get back $s$ and $i$ in the following way.
We consider $u$ the lowest vertex of the cycle, and: 

    $\bullet$  if $u$ has a subtree inside the cycle, then $u=s'$, and $s$ is its
        leftmost son that lies in the inner face, and $i$ is even,
        
    $\bullet$  if not, then $u=s$ and $i$ is odd.
\end{proof}

\section{Intrinsic thickness and enumeration}\label{sec:periodic}

In this section we study the relations between Periodic Parallelogram Polyominoes having the same list of trees through the bijection of Theorem~\ref{thm:bijquadruplet} but with different intrinsic thicknesses. The principal result is given in Theorem~\ref{thmaire}, it has consequences in terms of periodicity of generating functions and leads us to introduce the notion of primitive PPP. Those are enumerated in Theorem~\ref{thmprimitive}.

\begin{Theorem}\label{thmaire}
Let $P_1$ and $P_2$ be two PPP's such that their images through the bijection of Theorem~\ref{thm:bijquadruplet} contain exactly the same list of $4$-tuples of trees, but have different intrinsic thickness. Assume that the difference between their intrinsic thicknesses is the width of their common trunk PPP.  Then we have:

\smallskip
$(i)$ $P_1$ and $P_2$ have the same upper (resp. lower) path, and have the same cyclic structure,

\smallskip
$(ii)$ the difference between the areas of $P_1$ and $P_2$ is $w\times h$, where $w$ is their common width and $h$  their common height.

\end{Theorem}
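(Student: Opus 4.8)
The plan is to run everything through the bijection $\psi$ of Theorem~\ref{thm:bijquadruplet}. Write $\psi(P_j)=(k_j,\mathcal L)$ for $j=1,2$, where $\mathcal L$ is the common list of $4$-tuples of trees and $k_j$ is the intrinsic thickness of $P_j$; after possibly exchanging $P_1$ and $P_2$ we may assume $k_2>k_1$, so the hypothesis reads $k_2-k_1=l$, the width of the trunk PPP. Here $l$ is well defined: $trunk(P_1)$ and $trunk(P_2)$ have the same width $l$ (equal to the length of $\mathcal L$), even though they are distinct polyominoes since $k_1\ne k_2$. Likewise $w$ and $h$ are common to $P_1$ and $P_2$, because $w=l+(\text{number of columns pruned when passing from }P_j\text{ to }trunk(P_j))$ and this count depends only on $\mathcal L$, and similarly for $h$. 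The first half of $(i)$ is then immediate from the description of $\psi$ recalled after Theorem~\ref{thm:bijquadruplet}: the cycle of $\varphi(P_j)$ has length $2l$, with one pair of consecutive cycle-vertices for each element of $\mathcal L$, and the $4l$ grafted trees are precisely those listed in $\mathcal L$; none of this involves $k_j$, so $P_1$ and $P_2$ share the same cyclic structure.

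For the upper and lower paths, I would reconstruct $P_j$ from $(k_j,\mathcal L)$ via $\psi^{-1}$ in two stages: first build $trunk(P_j)$, with upper path $N^{k_j}(NE)^{l}$, lower path $(EN)^{l}N^{k_j}$ and gluing size $k_j$, then regraft the trees of $\mathcal L$ by inverting the pruning of \cite[Section~4]{BLZ}, which reinserts only unit rows and unit columns at positions prescribed by $\mathcal L$. Thus the upper path of $P_j$ is obtained from that of $trunk(P_j)$ by interleaving, at $\mathcal L$-fixed places, the $N$-steps of the regrafted columns and the $E$-steps of the regrafted rows, and the parameter $k_j$ enters only through the leading block $N^{k_j}$, which is exactly the portion of the first column that is glued. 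Replacing $k_1$ by $k_2=k_1+l$ therefore inserts $l$ extra $N$-steps into this block while enlarging the gluing by the same amount $l$, so that, once ``the upper path'' is read as the boundary of the polyomino drawn on the cylinder, nothing changes; the same applies to the lower path. This establishes $(i)$. The subtle point, which I expect to be the real content of this step, is to formalize precisely this cylindrical reading of the boundary and to check that increasing the gluing by the trunk width genuinely amounts to re-cutting the same periodic boundary.

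For $(ii)$ I would write $\mathrm{area}(P_j)=\mathrm{area}(trunk(P_j))+c_j$, where $c_j$ counts the cells of $P_j$ lying in regrafted rows or columns. Since $trunk(P_j)$ is an $l$-column staircase in which every column has $k_j+1$ cells, $\mathrm{area}(trunk(P_j))=l(k_j+1)$, so the trunk contributes $l(k_2-k_1)=l^{2}$ to the area difference. To compare $c_1$ and $c_2$ I would use $(i)$ together with a careful reading of the regrafting: increasing the intrinsic thickness by one full period $l$ lengthens each regrafted column by exactly $h$ cells and each regrafted row by exactly $w$ cells, so with $a=w-l$ regrafted columns and $b=h-l$ regrafted rows, and correcting for the cells counted in both a regrafted row and a regrafted column, $c_2-c_1=ah+bw-ab=l(a+b)+ab$. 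Adding the trunk term gives $\mathrm{area}(P_2)-\mathrm{area}(P_1)=l^{2}+l(a+b)+ab=(l+a)(l+b)=wh$, as desired. An equivalent and probably cleaner way to organize the proof is to observe that $P_2$ is obtained from $P_1$ by slitting the cylinder along a helical path and inserting one transverse period of the underlying periodic strip, a block of $w\times h$ cells; making this insertion precise, and in particular verifying that it changes neither the width nor the height, is the main obstacle.
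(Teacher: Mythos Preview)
Your approach is genuinely different from the paper's: you attempt a direct decomposition (trunk plus regrafted trees) and a global cell count, whereas the paper proceeds by induction on the number of non-root vertices in the common list $\mathcal L$, peeling off one leaf $u$ at a time, encoding the upper and lower paths as binary words, and showing that the operation ``add $u$ back'' acts on these words in a way that depends only on the words and on the cyclic structure, not on the intrinsic thickness. Part $(ii)$ is then obtained inductively as well: removing/adding a leaf changes all column heights uniformly, and one tracks whether the added leaf is a row or a column.

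For $(i)$ your intuition---that $k_j$ only changes where the cylinder is slit---is the right picture, but the gap you flag is real and more serious than you suggest. The theorem (and the paper's proof) asserts equality of the actual planar boundary words, not merely of their cylindrical classes. After regrafting, the first column of $P_j$ need not be a trunk column and the gluing size of $P_j$ need not equal $k_j$, so the claim ``$k_j$ enters only through the leading block $N^{k_j}$'' does not survive regrafting in the simple form you state. The paper's leaf-by-leaf induction is precisely what turns the cylindrical intuition into an equality of finite words.

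For $(ii)$ your explicit count has a concrete error. You assert that ``increasing the intrinsic thickness by $l$ lengthens each regrafted row by exactly $w$ cells''. But $P_1$ and $P_2$ have the same width $w$, so no row can gain $w$ cells; the row widths are in fact unchanged. Likewise, ``each regrafted column is lengthened by $h$'' is exactly the statement one is trying to prove (it is equivalent to saying every column of $P_2$ is $h$ cells taller than the matching column of $P_1$), so invoking it is circular. The identity $l^{2}+ah+bw-ab=(l+a)(l+b)=wh$ is of course algebraically correct, but the combinatorial meaning you attach to the summands does not hold. What the paper shows---by induction---is that one passes from $P_1$ to $P_2$ by adding a \emph{fixed} number of boxes to every column, and that this number is $h(P_1)$; this yields $w\cdot h$ in one stroke without any inclusion--exclusion. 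Your closing ``periodic strip'' reformulation is the correct geometric content, but making it precise amounts to redoing the paper's induction in disguise.
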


\begin{figure}[!ht]
\begin{center}
\includegraphics[scale=1]{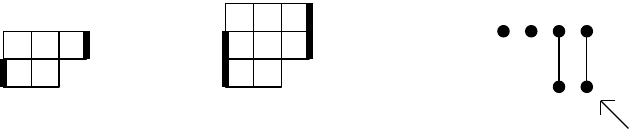}
\caption{Illustration of Theorem~\ref{thmaire}. The list of $4$-tuples of trees is on the right, and the width of the common PPP-trunk is $1$. The  PPP's have intrinsic thickness $1$ and $2$.}
\end{center}
\end{figure}

\begin{proof}
We proceed by induction on the number of non-root vertices in the list of $4$-tuples of trees. If this number is zero, then the two PPP's are staircase PPP's, with the same width and height, and checking properties about paths and areas is immediate. The property about cyclic structures comes directly from the hypothesis about the difference between their intrinsic thicknesses.

Let $P_1$ and $P_2$ be two PPP's satisfying the hypothesis. Denote by $L$ their common list of $4$-tuples of trees and choose $u$ one leaf of greatest depth of one tree in $L$. Let $P_3$ (\emph{respectively} $P_4$) be the PPP with the same intrinsic thickness as $P_1$ (\emph{respectively} $P_2$) and whose corresponding list (denoted by $L'$) is $L$ in which we delete the leaf $u$. By induction hypothesis, $P_3$ and $P_4$ satisfies $(i)$ and $(ii)$. 

We encode the upper (\emph{respectively} lower) path of $P_i$ (for $i \in \{1,\ldots,4\}$) by a finite binary word $w_{u,i}$ (\emph{respectively} $w_{\ell,i}$) by encoding  each horizontal step of the path by $1$, each vertical step of the path by $0$ and reading this (0-1)-encoding from bottom to top and left to right. In this encoding, each black vertex  in $L$ corresponds to a $0$ in the upper binary word and each white vertex corresponds to an $1$ in the lower binary word. By induction hypothesis, we have $w_{u,3}=w_{u,4}$ and $w_{\ell,3}=w_{\ell,4}$.

We now want to describe the impact on binary words of adding the vertex $u$ to the list $L'$. We will show that this impact depends only on the binary words and on the cyclic structure (and does not depends on the intrinsic height). This will allow us to prove $(i)$.

Denote by $v$ the father of $u$. Assume first that $u$ is a white vertex (i.e we want to add a column to the PPP). Adding $u$ to the list $L'$ is done in the following way: determine the $0$ in the upper binary word corresponding to $v$ and replace it by $01$. Next, according to which tree contains $v$, find the $1$ in the lower binary word corresponding to the column located directly at the right of the $0$ in the upper binary word, or the $1$ in the lower binary word corresponding to the column located at the end of the sequence of $1$ following the $0$ in the upper binary word and replace this $1$ by $11$. If $u$ is a black vertex, the same description also holds, after exchanging the roles of $0$ and $1$ and changing column by line. All this process depends only on the considered binary words and on the cyclic structure of the PPP, and does not depend on the intrinsic thickness. Thus, we make the same changes on $w_{u,3}$ and $w_{u,4}$ and on 
$w_{\ell,3}$ and $w_{\ell,4}$. As these words are equal by induction hypothesis, we also have $w_{u,1}=w_{u,2}$ and $w_{\ell,1}=w_{\ell,2}$. Moreover, all the operations preserve the cyclic structure. Then, we have proved $(i)$.

As $P_3$ and $P_4$ have the same lower and upper paths, we can obtain the one with greatest area (assumed now to be $P_4$) by adding a fixed number of boxes to each column of the other, namely $h(P_3)$ according to the induction hypothesis $(ii)$. As we already seen, $P_1$ and $P_2$ also have the same lower and upper paths, so we can obtain the one with greatest area by adding a fixed number of boxes to each column of the other. 

In the case where  we add a column to $P_3$ to obtain $P_1$, we can directly conclude that this number of boxes is the same for transforming $P_3$ to $P_4$ and for transforming $P_1$ to $P_2$, namely $h(P_3)$, which is also $h(P_1)$. The property $(ii)$ follows in this case.

In the case where we add a line to $P_3$ to obtain $P_1$, it is not direct but we can show that the number of boxes by column necessarily to transform $P_1$ to $P_2$ is equal to one plus the number of boxes necessarily to transform $P_3$ to $P_4$. So we need $h(P_3)+1=h(P_1)$ boxes by column. Property $(ii)$ follows in this case, and this concludes the proof. \end{proof}

We now want to emphasize the fact that if we have two PPP's satisfying the hypothesis of the previous theorem, the one with greatest area  can be obtained by adding $h$ boxes to each column of the other (where $h$ is the height of the two PPP's), and marking the result such that the two PPP have the same semi-perimeter. If a PPP $P_1$ can be obtained from another one PPP $P_2$ through this process, we say that $P_1$ \emph{derives} from $P_2$. This leads us to distinguish a subset of PPP's, the ones which can not be derived from another one.

\begin{definition}\label{defprimitif}
    Let $P$ be a PPP. $P$ is \emph{primitive} iff its intrinsic thickness is
    less or equal to the width of its trunk PPP.
\end{definition}

\begin{corollary}
    Let $n$ be an integer. Let $A_n(q)$, $B_n(q)$ and $C_n(q)$ be respectively
    the generating function of PPP's, marked PPP's and strips of fixed
    semi-perimeter according to the area.
\begin{equation}A_n(q):= \sum_{P \in PPP,~sp(P)=n} q^{area(P)}.
\end{equation}
The coefficients of $A_n(q)$, $B_n(q)$ and $C_n(q)$ are ultimately periodic. In
all three cases, period divides the least common multiple of the integers $i(n-i)$
for $i \in \{1, \dots, n-1\}$.
\end{corollary}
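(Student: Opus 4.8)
The plan is to read the statement off the \emph{derivation} operation introduced just before Definition~\ref{defprimitif}. The point is that the PPP's of a fixed semi-perimeter $n$ (and, with a little extra care, the marked PPP's and the strips of semi-perimeter $n$) organize into finitely many infinite ``towers'', each generated from a single primitive object, and that along a tower the area increases at every step by a constant of the form $i(n-i)$.

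Concretely, I would first assemble the following consequences of Theorem~\ref{thmaire} and of the paragraph that follows its proof. Let $P$ be a PPP of semi-perimeter $n$, write $w$ for its width and $h=n-w$ for its height, and let $\ell$ be the width of $trunk(P)$ (equivalently the length of the list $L$ of $4$-tuples of trees attached to $P$ by $\psi$). Increasing the intrinsic thickness of $P$ by $\ell$ while keeping $L$ fixed produces, through the bijection of Theorem~\ref{thm:bijquadruplet}, a well-defined PPP $P'$, and by Theorem~\ref{thmaire} and the discussion after it, $P'$ again has width $w$, height $h$, semi-perimeter $n$, and $area(P')=area(P)+w(n-w)$; that is, $P'$ is exactly the PPP that derives from $P$. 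Conversely one can decrease the intrinsic thickness by $\ell$ precisely when the thickness exceeds $\ell$, i.e.\ precisely when $P$ is \emph{not} primitive (Definition~\ref{defprimitif}); this inverse step keeps $L$ (hence $\ell$) fixed and strictly decreases the area, so iterating it terminates after finitely many steps at a primitive PPP, which is moreover unique because the step is the inverse of derivation. Hence the set of PPP's of semi-perimeter $n$ is the disjoint union, over the primitive PPP's $P_0$ of semi-perimeter $n$, of the towers $\{P_0,P_1,P_2,\dots\}$ obtained by iterating derivation from $P_0$; within such a tower all terms share the width $w=w(P_0)$, and $area(P_j)=area(P_0)+j\,w(n-w)$.

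Next I would note that for fixed $n$ there are only finitely many primitive PPP's of semi-perimeter $n$: the map $\varphi(P)$ has exactly $w+h=n$ vertices, so the list $L$ ranges over a finite set, and a primitive PPP has intrinsic thickness at most its trunk width $\ell\le n$; since a PPP is determined by $L$ and its intrinsic thickness (Theorem~\ref{thm:bijquadruplet}), there are finitely many choices. Summing $q^{area}$ over the towers therefore gives
\[
A_n(q)=\sum_{w=1}^{n-1}\frac{R_{n,w}(q)}{1-q^{w(n-w)}},
\]
where $R_{n,w}(q)$ is the generating polynomial, by area, of the primitive PPP's of semi-perimeter $n$ and width $w$. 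Finally I would invoke the elementary fact that, for a polynomial $R$, the coefficient of $q^m$ in $R(q)/(1-q^d)$ depends only on $m\bmod d$ once $m\ge\deg R$; thus a finite sum of such rational functions has ultimately periodic coefficients, with period dividing the least common multiple of the exponents $d$ occurring. Here the $d$'s belong to $\{i(n-i):1\le i\le n-1\}$, so the coefficients of $A_n(q)$ are ultimately periodic with period dividing $\mathrm{lcm}\{i(n-i):1\le i\le n-1\}$.

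For $B_n(q)$ and $C_n(q)$ I would repeat this argument once the derivation operation has been lifted to these objects with the same area shift $w(n-w)$. For marked PPP's: the derivation step enlarges the first column while raising the gluing correspondingly (so as to keep the semi-perimeter equal to $n$), and one checks that the number of admissible markings of the first column is then eventually constant along each tower; hence $B_n(q)$ is again a polynomial plus a finite sum $\sum_w \widetilde R_{n,w}(q)/(1-q^{w(n-w)})$ with $\widetilde R_{n,w}$ polynomial, and the conclusion is identical. For strips: the cylinder rotation preserves width, height, area and intrinsic thickness and commutes with the addition of boxes to columns, so derivation descends to rotation orbits with the same area shift, and the tower decomposition carries over to strips. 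I expect the only genuine work, once Theorem~\ref{thmaire} is in hand, to be this bookkeeping for the marked and strip variants — in particular checking that the marking count is stable along a tower and that derivation is well defined on rotation orbits; everything else is the routine partial-fractions remark above.
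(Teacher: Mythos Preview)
Your proposal is correct and follows essentially the same approach as the paper: decompose the PPP's of semi-perimeter $n$ into derivation towers over primitive PPP's, use Theorem~\ref{thmaire} to get the constant area increment $w(n-w)$ along each tower, and read off ultimate periodicity from the resulting finite sum of geometric-type series; the extensions to $B_n$ and $C_n$ are handled the same way in both (stability of the marking count along a tower, and commutation of derivation with rotation). The only notable difference is that you prove finiteness of primitive PPP's of semi-perimeter $n$ directly from the boundedness of the list $L$ and the intrinsic thickness, whereas the paper appeals forward to the exact count in Theorem~\ref{thmprimitive}; your route is logically cleaner in that it avoids the forward reference.
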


\begin{proof}
We first consider the generating function $A_n(q)$. According to Theorem~\ref{thmaire} $(ii)$, we can gather PPP's depending on which primitive PPP they derive to obtain:
\begin{equation}\label{eqperiod}
A_n(q) = \sum_{P \in \text{ primitive PPP}} q^{area(P)} \left(1+q^{w(P)h(P)}+q^{2w(P)h(P)}+\cdots \right),
\end{equation}
where $w(P)$ and $h(P)$ denotes the width and height of $P$.

According to Theorem~\ref{thmprimitive} below, there is only a finite number of
primitive PPP's with semi-perimeter $n$. Therefore, in \eqref{eqperiod}, we
write $A_n(q)$ as a finite sum of series with ultimately periodic coefficients.
This implies that the coefficients of $A_n(q)$ are ultimately periodic and  the
period is a divisor of the least common multiple of all the periods of those
series, namely all the integers $w(P)h(P)$, where $P$ is a primitive PPP. As we
have  $w(P)+h(P)=n$, the result follows for $A_n$.

When $P_1$ derives from $P_2$, they have the same number of boxes in the first
column that we can choose for the second mark, moreover, the derivation
commutes with the rotation, hence, the same proof still stands for $B_n$ and
$C_n$.
\end{proof}

In the case of PPP's, the previous result is new. In the case of marked PPP's, it was already known that the coefficients of $B_n$ are ultimately periodic, but the knowledge concerning period is that the period divides $n$ \cite[Theorem 2.3]{BJN2}. Mixing this with our result allows us to re-obtain the following result \cite[Corollary 4.1]{HJ}.

\begin{corollary}
Let $n$ be a prime integer. The period of the coefficients of the generating function of marked PPP $B_n(q)$ is 1.
\end{corollary}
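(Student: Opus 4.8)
The plan is to combine the two periodicity statements for $B_n(q)$: the period divides $n$ (from \cite[Theorem 2.3]{BJN2}), and the period divides $\mathrm{lcm}\{\, i(n-i) : 1 \le i \le n-1 \,\}$ (from the previous corollary). Since the actual period of a sequence divides any number that is known to be a period, it divides the greatest common divisor of these two quantities, so it suffices to prove that
\begin{equation}\label{eq:gcdprime}
\gcd\!\left(n,\ \mathrm{lcm}\{\, i(n-i) : 1 \le i \le n-1 \,\}\right) = 1
\end{equation}
when $n$ is prime.

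To establish \eqref{eq:gcdprime}, I would argue that $n$ divides none of the integers $i(n-i)$ for $1 \le i \le n-1$. Indeed, modulo the prime $n$ we have $i(n-i) \equiv -i^2 \pmod n$, and since $1 \le i \le n-1$, the residue $i$ is nonzero mod $n$; as $n$ is prime, $i^2 \not\equiv 0 \pmod n$, hence $n \nmid i(n-i)$. Because $n$ is prime, the only way $n$ could divide the least common multiple of the $i(n-i)$ would be for $n$ to divide at least one of them (a prime dividing a product of integers must divide one factor, and an lcm is built from such factors); we have just ruled this out. Therefore $\gcd(n, \mathrm{lcm}\{i(n-i)\}) = 1$, and a sequence whose period divides two coprime integers has period $1$, i.e.\ is eventually constant.

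The only subtlety worth spelling out is the elementary fact that if a sequence is ultimately periodic with period $p$ and also with period $p'$, then it is ultimately periodic with period $\gcd(p,p')$; this is standard (push both indices past the pre-period and use Bézout), and combined with \eqref{eq:gcdprime} it forces period $1$. I do not expect any real obstacle here — the statement is essentially a number-theoretic corollary of the two periodicity bounds already in hand, the key being the clean observation $i(n-i)\equiv -i^2\pmod n$ together with primality of $n$. One could remark, as a sanity check, that $B_n(q)$ is not literally constant but only eventually constant, which is exactly what ``period $1$'' means for the coefficient sequence.
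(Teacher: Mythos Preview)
Your proposal is correct and follows essentially the same approach as the paper: combine the two known periods, $n$ and $\mathrm{lcm}\{i(n-i):1\le i\le n-1\}$, and observe that their gcd is $1$ when $n$ is prime. The paper's version is simply terser, asserting without detail that this gcd equals $1$; your argument via $i(n-i)\equiv -i^2\pmod n$ just spells out that elementary step.
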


\begin{proof}
We already known that the sequence of coefficients of $B_n(q)$ admits both periods $n$ and the least common multiple of the integer $i(n-i)$ for $i \in \{1, \dots, n-1\}$. So it admits a period equal to the greatest common divisor of these two numbers. When $n$ is prime, this gcd is 1.
\end{proof}

We now focus on primitive PPP's and achieve their enumeration according to semi-perimeter.

\begin{Theorem}\label{thmprimitive}
Let $n$ be an integer. The number of primitive PPP's with semi-perimeter $n$ is $\displaystyle \frac{(2n+1)!}{n!^2}$. Their generating series is $\displaystyle pPPP(z):=\sum_{P \in \text{ primitive~PPP}} z^{sp(P)}=z^2(1-4z)^{-3/2}$.
\end{Theorem}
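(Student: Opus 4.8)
The plan is to compute the generating series $pPPP(z)$ by combining the bijection of Theorem~\ref{thm:bijquadruplet} with the structural characterization of primitivity in Definition~\ref{defprimitif}. Recall that a PPP is primitive iff its intrinsic thickness $g$ is at most the width $l$ of its trunk PPP. Through $\psi$, a PPP corresponds to a pair $(g, L)$ where $L$ is a (non-empty) list of $4$-tuples of bicolored ordered trees, with one mark in the first tuple (on a non-root black vertex or on the two black roots), and the length of the list equals the width $l$ of the trunk. So the set of primitive PPP's of semi-perimeter $n$ is in bijection with triples $(g, L)$ where $1 \le g \le \operatorname{length}(L)$ and the total contribution to the semi-perimeter is $n$. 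The first step is therefore to determine how the semi-perimeter is read off from $(g,L)$: each column vertex contributes to the width and each row vertex to the height, the trunk contributes $2l$ vertices (one black and one white per column of the trunk), and the gluing size $g$ does not affect $sp$. Hence $sp$ depends only on $L$ (it is the total number of vertices in $L$, since each vertex is a column or a row), and for fixed $L$ of trunk-width $l$ there are exactly $l$ admissible values of $g$.

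The second step is to translate this into generating functions. Let $T(z)$ be the generating series for ordered rooted trees counted by vertices, $T(z) = \frac{1-\sqrt{1-4z}}{2z}$, so $zT(z)^2 = \frac{1-2z-\sqrt{1-4z}}{2z}$ is the series for a pair of trees hanging off a common extra vertex; more to the point, a $4$-tuple of trees with a fixed pair of ``cycle vertices'' (one black, one white) carries weight $z^2 T(z)^4$ per tuple, and a list of $l$ such tuples has weight $\left(z^2 T(z)^4\right)^l$ — but we must be careful to encode the marking in the first tuple and the factor $l$ counting the choices of $g$. Writing $F(z) := z^2 T(z)^4$ for the weight of one unmarked tuple, a list of length $l$ with the $g$-multiplicity built in contributes $l \, F(z)^l$ to a ``$g$-forgetful'' sum, and then one replaces the first tuple's unmarked weight by its marked weight $M(z)$. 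Summing over $l \ge 1$ gives $pPPP(z) = M(z) \cdot \sum_{l \ge 1} l\, F(z)^{l-1} = \frac{M(z)}{(1-F(z))^2}$, and the remaining task is to compute $M(z)$ (the marked-tuple series) and $F(z)$ explicitly, check $1 - F(z) = \sqrt{1-4z}\cdot(\text{something})$, and verify the whole thing simplifies to $z^2(1-4z)^{-3/2}$. The coefficient extraction $[z^n]\, z^2(1-4z)^{-3/2} = \binom{2(n-2)}{n-2}(2(n-2)+1)$ — equivalently $\frac{(2n-3)!}{(n-2)!\,(n-1)!}\cdot(\text{normalization})$ — should then be reconciled with the claimed $\frac{(2n+1)!}{n!^2}$; I expect a shift in the indexing convention for $sp$ here that must be pinned down at the outset.

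The main obstacle I anticipate is bookkeeping the marking correctly: the mark sits on a \emph{non-root black vertex of one of the trees, or on the two black roots simultaneously}, and getting the marked-tuple series $M(z)$ right — accounting for the ``pointing a non-root vertex'' operation (which is $z \frac{d}{dz}$ minus the root term, applied to the relevant tree factors) plus the single extra ``mark both black roots'' term — is the delicate computation. A secondary subtlety is making sure the factor $l$ really does count the choices of gluing size $g \in \{1, \dots, l\}$ in the primitive range, and that no primitive PPP is counted twice or omitted (in particular the excluded degenerate rectangular case and the non-emptiness of $L$). Once $M(z)$, $F(z)$ are in hand, the algebraic simplification to $z^2(1-4z)^{-3/2}$ is routine, and the enumeration formula follows by a standard binomial-coefficient identity; as the authors note, no bijective proof is offered, so the generating-function route is the intended one.
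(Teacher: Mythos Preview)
Your plan is essentially the paper's own proof. The paper uses exactly the decomposition you describe: via Theorem~\ref{thm:bijquadruplet} and Definition~\ref{defprimitif}, a primitive PPP corresponds to a list of $k$ four-tuples together with an integer $g\in\{1,\dots,k\}$, the semi-perimeter is independent of $g$, and this produces the factor $k$; summing $\sum_{k\ge1}k\,F(z)^{k-1}M(z)=M(z)/(1-F(z))^2$ and simplifying with $T(z)=\tfrac{1-\sqrt{1-4z}}{2z}$ yields $z^2(1-4z)^{-3/2}$. The only difference is that the paper does not compute the marked-tuple series $M(z)$ by the pointing argument you outline but simply imports it from \cite[Equation~(4)]{BLZ} as $\tfrac{(\sqrt{1-4z}-1)^4}{16z^4\sqrt{1-4z}}$ (times the $z^2$ absorbed in the $z^{2k}$), and your observation about the index shift in the closed form $\tfrac{(2n+1)!}{n!^2}$ versus $[z^n]\,z^2(1-4z)^{-3/2}$ is well taken.
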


\begin{proof}
According to the bijection of Theorem~\ref{thm:bijquadruplet} and the
Definition~\ref{defprimitif}, the set of primitive PPP's with semi-perimeter
$n$ is in bijection with the set of lists of length $k$ (where $k$ is an
arbitrary integer) of $4$-tuples of trees and an integer between $1$ and $k$
(where this integer is the intrinsic thickness). In terms of generating
function, using the computation of generating functions of such 4-tuples of
trees done in \cite[Equation 4]{BLZ}, this leads to:
\begin{equation}
pPPP(x)=\displaystyle \sum_{k \geq 1}k z^{2k}  A(z)^{4(k-1)} \frac{{(\sqrt{1-4z} - 1)}^{4}}{16 \, z^{4} \sqrt{1-4z}},
\end{equation}
where $A(z)$ is the well-known generating function of planar tree according to
the number of vertex. Replacing the term $z^{2k}$ by $u^k$ allows us to compute
the sum after recognizing that this sum is now the partial differentiation with
respect to variable u of a geometric sum. We then replace $u$ by $z^2$ to
obtain:
\begin{equation}
pPPP(x)=\frac{{(\sqrt{1-4z} - 1)}^{4}}{16 \, z^{4} \sqrt{1-4z}} \frac{z^2}{(1-z^2A(z)^4)^2}. 
\end{equation}
As an exact expression for $A(z)$ is known, a straightforward computation leads
to the announced expression of $pPPP(x)$. The number of primitive PPP's comes
from coefficient extraction.
\end{proof}

%The sequence of numbers of primitive PPP's with semi-perimeter $n$ starting with $n=2$ is the sequence A002457 of \cite{oeis}. This sequence, among other things, is the sum of the area of the triangles obtained by extending the triangular peaks of Dyck path (with semilength $n$) down to $x$-axis (forming possibly overlapping triangles). A bijective proof (similar to the Bijection~\ref{thm:bijPhi}) is still missing.

Proposition~\ref{pop_dyck_area} and Theorem~\ref{thmprimitive} are very close.
Indeed, PPP's with intrinsic thickness equal to 1 form a subset of primitive
PPP's. The first family is enumerated by sequence A008549 of \cite{oeis}, which
counts the total (triangular) area under Dyck paths of fixed size, while the
second is enumerated by the sum of the area of the triangles obtained by
extending the triangular peaks of Dyck paths of fixed size
(\cite[A002457]{oeis}). A bijective proof of the first result is given at
Section~\ref{sec:bijPPP}. A similar bijection for
Proposition~\ref{pop_dyck_area} (which could unify the two results) is still
missing.

The sequence \cite[A008549]{oeis} counts also the number of edges in the Hasse
diagram of the poset of partitions contained in the $n\times n$ box and ordered by
containment. Unexpectedly, a subset of primitive PPP's, thin PPP's, counts
those partitions, minus the empty one (\cite[A030662]{oeis}). A
\emph{thin PPP} is a PPP such that one column contains only vertex cells, in
particular a thin PPP is of intrinsic height 1.

Finally, a computer exploration using Sage~\cite{sage} tells us until
semi-perimeter 8 that the number of marked primitive PPP's is equal to twice
the number of primitive PPP's.

\section{Asymptotic of strips}\label{sec:asymptotic}

 In this section, we study asymptotic estimate of the coefficients of the generating function $B(z)$ of strips with intrinsic thickness 1 (equivalently $i$, with $i$ an arbitrary number) according to their semi-perimeter. 
 
We will use here classical methods in asymptotic theory, which can mostly be
found in \cite{FS}. Recall that the fundamental idea is that the exponential
growth of the coefficients is determined by the dominant singularity of the
generating function (which is analytic at the origin), \textit{i.e}
singularities at the boundary of the disc of convergence,  while the
subexponential factor can be computed by studying the type of this
dominant singularity (for example, the order of the poles). We state here the
singular expansion in the case of  algebraic-logarithmic singularity
\cite[Theorem VI.6, Equation (27)]{FS} that we use latter.

\begin{Theorem}\label{alglog}
Let $\alpha$ and $\mu$ be two positive integers. Then the coefficients $f_n$ of $f(x)=(1-\mu x)^{-\alpha}\log^ k(\frac{1}{1-\mu z})$ admit the following asymptotic expansion in descending power of $n$: \begin{equation}
f_n =[x^n]f(x) :=  \mu^n n^{\alpha-1} \left[F(\log(n))+O\left(\frac1{\log(n)}\right)\right],
\end{equation}
where $F$ is an explicitly computable polynomial with degree $k-1$ and $O$ is the Landau notation.
\end{Theorem}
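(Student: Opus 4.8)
The plan is to reduce to the case $\mu=1$ and then to extract the expansion from the classical device of differentiating with respect to the exponent $\alpha$. Since $[x^n]g(\mu x)=\mu^n[x^n]g(x)$, the substitution $x\mapsto\mu x$ gives immediately $[x^n]f(x)=\mu^n[x^n]\bigl((1-x)^{-\alpha}\log^k(1/(1-x))\bigr)$, so it suffices to treat $\mu=1$. Next I would use the identity $\partial_\alpha^k(1-x)^{-\alpha}=(1-x)^{-\alpha}\log^k\!\bigl(1/(1-x)\bigr)$, valid for $|x|<1$; since $(1-x)^{-\alpha}=\sum_{n\ge 0}\binom{n+\alpha-1}{n}x^n$ has coefficients that are polynomials in $\alpha$, one may differentiate the series term by term, whence
\begin{equation}
[x^n]f(x)=\mu^n\,\frac{d^k}{d\alpha^k}\binom{n+\alpha-1}{n}=\mu^n\,\frac{d^k}{d\alpha^k}\,\frac{\Gamma(n+\alpha)}{\Gamma(\alpha)\,\Gamma(n+1)}.
\end{equation}
Everything is thereby reduced to the asymptotics, for fixed $\alpha,k$ and growing $n$, of the $k$-th $\alpha$-derivative of the binomial coefficient $\binom{n+\alpha-1}{n}$.

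The second step is to feed in the standard Stirling expansion
\begin{equation}
\binom{n+\alpha-1}{n}=\frac{\Gamma(n+\alpha)}{\Gamma(\alpha)\,\Gamma(n+1)}=\frac{n^{\alpha-1}}{\Gamma(\alpha)}\Bigl(1+\sum_{j\ge 1}\frac{e_j(\alpha)}{n^j}\Bigr),
\end{equation}
where the $e_j$ are polynomials in $\alpha$, and to differentiate $k$ times in $\alpha$. Writing $n^{\alpha-1}=e^{(\alpha-1)\log n}$ and applying the Leibniz rule, the derivatives falling entirely on $n^{\alpha-1}/\Gamma(\alpha)$ produce
\begin{equation}
\frac{d^k}{d\alpha^k}\Bigl(\frac{n^{\alpha-1}}{\Gamma(\alpha)}\Bigr)=n^{\alpha-1}\sum_{j=0}^{k}\binom{k}{j}(\log n)^j\Bigl(\frac1\Gamma\Bigr)^{(k-j)}\!(\alpha)=:n^{\alpha-1}F(\log n),
\end{equation}
a polynomial in $\log n$ of degree $k$ with leading coefficient $1/\Gamma(\alpha)$, whose remaining coefficients are explicit in terms of the derivatives of $1/\Gamma$ at $\alpha$ (equivalently, of the polygamma functions). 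Every other term in the Leibniz expansion carries at least one factor $1/n^j$ and hence contributes $O\bigl(n^{\alpha-1}(\log n)^k/n\bigr)$, which is $o\bigl(n^{\alpha-1}F(\log n)/\log n\bigr)$. Multiplying back by $\mu^n$ yields exactly the announced form $\mu^n n^{\alpha-1}\bigl[F(\log n)+O(1/\log n)\bigr]$.

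Alternatively, one may simply quote the statement: $f$ is analytic in a slit neighbourhood of its unique dominant singularity $x=1/\mu$, where its singular expansion is $f(x)=(1-\mu x)^{-\alpha}\log^k(1/(1-\mu x))$ itself, so the transfer theorem for the standard logarithmic scale \cite{FS} applies verbatim (no hypothesis beyond $\Delta$-analyticity is needed, and the integrality of $\alpha,\mu$ plays no role beyond ensuring $\Gamma(\alpha)\ne\infty$). The only point requiring care in a self-contained argument is the uniformity of the Stirling remainder under $\alpha$-differentiation, but since the $e_j(\alpha)$ are polynomials and $k$ is fixed this is routine bookkeeping, and the coarse bound $O(1/\log n)$ leaves ample slack; I therefore expect no genuine obstacle, the statement being a convenient repackaging of a well-known transfer theorem for later use in the analysis of strips.
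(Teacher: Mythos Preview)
The paper does not prove this theorem at all: it is quoted verbatim from \cite[Theorem~VI.6, Equation~(27)]{FS} and used as a black box in the asymptotic analysis of strips. So there is no ``paper's own proof'' to compare against; your write-up supplies a proof where the paper simply cites one.

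Your argument is the standard one and is correct. Reducing to $\mu=1$ by rescaling, recognising $(1-x)^{-\alpha}\log^k(1/(1-x))$ as $\partial_\alpha^k(1-x)^{-\alpha}$, and then differentiating the Stirling expansion of $\Gamma(n+\alpha)/(\Gamma(\alpha)\Gamma(n+1))$ via Leibniz is exactly how this is done in Flajolet--Sedgewick. The alternative you mention (invoke $\Delta$-analyticity and transfer directly) is also fine and is in fact closer in spirit to how the paper uses the result.

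One point worth flagging: your computation gives $F$ of degree $k$ with leading coefficient $1/\Gamma(\alpha)$, whereas the statement claims degree $k-1$. You are right and the statement as written is off by one: for $\alpha$ a positive integer, $1/\Gamma(\alpha)\neq 0$ and the top term $(\log n)^k/\Gamma(\alpha)$ survives. The degree $k-1$ would be correct only when $1/\Gamma(\alpha)=0$, i.e.\ for $\alpha\in\{0,-1,-2,\dots\}$; and indeed the paper's application takes $\alpha=0$ (written ``$m=0$'' there), which lies outside the hypothesis ``$\alpha$ a positive integer''. So the discrepancy is a slip in the paper's restatement of the Flajolet--Sedgewick theorem, not in your proof.
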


 We now state the principal result of this section.

\begin{Theorem}
Let $b_n:= [z^n]B(z)$ be the coefficients of $B(z)$. Then they admit the following asymptotic estimation in  descending power of $n$ :
\begin{equation}
b_n= \frac{ 4^n}{2n} \left[1+O\left(\frac{1}{log(n)}\right)\right],
\end{equation}

\end{Theorem}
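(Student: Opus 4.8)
The statement is proved by singularity analysis (see \cite{FS}). Since the coefficients are asserted to grow like $4^n$, the dominant singularity of $B(z)$ must be $z=1/4$, and everything hinges on determining its type. I expect it to be logarithmic, with local behaviour $B(z)\sim\tfrac12\log\frac{1}{1-4z}$, which yields the prefactor $\tfrac1{2n}$ after transferring to coefficients by Theorem~\ref{alglog}.

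\emph{Reducing $B(z)$ to a tractable form.} Strips of intrinsic thickness $1$ are the orbits of PPP's of intrinsic thickness $1$ under the rotation of the cylinder, which (via the bijection $\psi$ of Theorem~\ref{thm:bijquadruplet}, or $\varphi$ of Proposition~\ref{prop:bij_it_1}) is the cyclic shift of the list of $4$-tuples of trees. A Burnside-type count, already carried out in \cite{BLZ}, shows that, up to an exponentially smaller contribution coming from the (rare) periodic PPP's on which the rotation fails to act freely, $B(z)$ equals $\sum_{w\ge1}\frac1w q_w(z)$, where $q_w(z)$ is the generating series, by semi-perimeter, of PPP's of intrinsic thickness $1$ and width $w$. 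Equivalently, writing $Q(x,z):=\sum_{w\ge1}q_w(z)x^w$ for the bivariate series of such PPP's by width and semi-perimeter, which is known in closed form from \cite{BLZ} and \cite[Equation~4]{BLZ},
\[
B(z)=\int_0^1 \frac{Q(x,z)}{x}\,dx+\bigl(\text{exponentially smaller terms}\bigr).
\]

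\emph{The singularity at $z=1/4$.} For $z<1/4$ the map $x\mapsto Q(x,z)$ is analytic on a neighbourhood of $[0,1]$, with its dominant $x$-singularity located essentially at $x=1/(4z)>1$; as $z\to 1/4$ this singularity moves to $x=1$ and pinches the endpoint of the integration path, which is precisely what produces a logarithmic singularity of $B$ at $z=1/4$. Carrying out this local analysis (a contour-pinch computation, of the same nature as $\int_0^1 dx/(1-4xz)=\tfrac1{4z}\log\frac1{1-4z}$) should yield
\[
B(z)=\tfrac12\log\frac{1}{1-4z}+R(z),
\]
with $R$ analytic at $z=1/4$ except for singular terms of strictly smaller order (subdominant algebraic-logarithmic terms). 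The constant $\tfrac12$, which reflects the exact order of growth of $q_w(z)$ as $w\to\infty$ near $z=1/4$, is the crux of the argument.

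\emph{Transfer and conclusion.} It remains to check that $z=1/4$ is the only singularity of $B$ on its circle of convergence (aperiodicity, which follows from the aperiodic support of the underlying tree series via the Daffodil lemma of \cite{FS}) and to continue $B$ to a $\Delta$-domain at $1/4$. Applying Theorem~\ref{alglog}: one has $[z^n]\log\frac1{1-4z}=4^n/n$ exactly, the subdominant part of $R$ contributes a relative error $O(1/\log n)$ (precisely the error term of Theorem~\ref{alglog}), and the part of $R$ analytic at $1/4$ contributes only $O(\rho^{-n})$ with $\rho>1/4$; altogether $b_n=\frac{4^n}{2n}[1+O(1/\log n)]$. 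The main obstacle is the second step: pinning down the exact leading term $\tfrac12\log\frac1{1-4z}$ (especially the constant $\tfrac12$) and the order of the remainder from the closed form of $Q(x,z)$, and justifying the $\Delta$-analytic continuation that Theorem~\ref{alglog} presupposes; by contrast, the Burnside reduction of the first step is routine and can be quoted from \cite{BLZ}. A check against the first few values of $b_n$ and \cite{oeis} is worthwhile.
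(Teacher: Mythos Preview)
Your overall plan---locate the dominant singularity at $z=1/4$, show the local behaviour is $\tfrac12\log\frac{1}{1-4z}$, then transfer via Theorem~\ref{alglog}---is exactly the paper's strategy. Where you differ is in the derivation of that local expansion, and there your route is unnecessarily indirect and, as you yourself say, left as an ``obstacle''.

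The paper avoids the bivariate series and the pinch analysis entirely. It quotes from \cite{BLZ} the \emph{explicit closed form}
\[
B(z)=-\sum_{i\ge1}\frac{\varphi(i)}{i}\,\log\!\left(1-\frac{(1-\sqrt{1-4z^i})^4}{16z^{2i}}\right),
\]
which is precisely the Burnside/necklace count you allude to, but already summed. The term $i=1$ is the only one singular at $z=1/4$ (for $i\ge2$ the singularity sits at $z=(1/4)^{1/i}\ge 1/2$), so the tail $A(z)=\sum_{i\ge2}(\cdots)$ is analytic in a disc of radius $>1/4$. Setting $\varepsilon=\tfrac14-z$, one has $1-4z=4\varepsilon$ and a direct Taylor expansion of the $i=1$ summand gives
\[
-\log\!\left(1-\frac{(1-2\sqrt{\varepsilon})^4}{(1-4\varepsilon)^2}\right)
=-\log\bigl(8\sqrt{\varepsilon}+O(\varepsilon)\bigr)
=-\tfrac12\log\varepsilon+O(1),
\]
so $B(z)=\tfrac12\log\frac{1}{1-4z}+O(1)$ near $z=1/4$, with a $O(\sqrt{1-4z})$ correction. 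The constant $\tfrac12$ falls out of a two-line computation; no contour pinch is needed.

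Two smaller remarks on your write-up. First, there is a slip in identifying the cyclic action: under $\psi$ the rotation corresponds to shifting the list of $4$-tuples, whose length is the width of the \emph{trunk}, not the width $w$ of the PPP; your $q_w$ and your integral formula are not quite the same object as the $i=1$ term above unless you index by trunk-width. Second, the bivariate series $Q(x,z)$ you invoke (with $x$ marking the full width) is not the one computed in \cite{BLZ}; what is available there is the series in the trunk-width, which when integrated gives exactly $-\log(1-z^2A(z)^4)$---again the $i=1$ term. So your path, once corrected, collapses to the paper's.
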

\begin{proof}
Recall that an expression of $B(z)$ in terms of an infinite sum is already known from \cite[Equation (2)]{BLZ}, and this expression is the following:
\begin{equation}
B(z)=-\displaystyle \sum_{i \geq 1}\displaystyle \frac{\varphi(i)}{i} \log \left(1-\frac{(1-\sqrt{1-4z^i})^4}{16z^{2i}}\right),
\end{equation} 
where $\varphi$ is Euler's totient function.

This insures us that the series $B$ has a single dominant singularity in $z=1/4$ and then we can write:
\begin{equation}
B(z)= -{\log \left(1-\frac{(1-\sqrt{1-4z})^4}{16z^{2}}\right)}+ A(z),
\end{equation}
where we can show that $A(z)$ is analytic at the origin and has radius of convergence $\delta>1/4$. Since the logarithmic term is analytic for $|z|<1/4$, we write $\varepsilon=1/4-z$. This leads to :
\begin{equation}
B(z)= -{\log \left(1-\frac{(1-\sqrt{\varepsilon})^4}{1+8\varepsilon+16\varepsilon^{2}}\right)}+ A(1/4+\varepsilon),
\end{equation}
which can be expanded in $\varepsilon$ through
\begin{equation}\label{eqlogB}
B(z)= -\frac{1}{2} \log(\varepsilon)+O(\sqrt{\varepsilon})+A(1/4-\varepsilon)= \frac{1}{2}\log(\frac{1}{1-4z})\left[1+O(1)\right].
\end{equation}

As $B(z)$ has an algebraic-logarithmic singularity, we can now apply a classical theorem of transfer (see for instance \cite[Theorem VI.3]{FS}) and Theorem\,\ref{alglog}  in the case $\mu=4$, $m=0$ and $k=1$ to conclude. The involved polynomial $F$ has degree $0$, thus it is  a constant, which is exactly  $1/2$, coming from the right-hand side of \eqref{eqlogB}.
\end{proof}

\subsection*{Acknowledgments}
\label{sec:ack}

This research was driven by computer exploration using the open-source software
\texttt{Sage}~\cite{sage} and its algebraic combinatorics features developed by
the \texttt{Sage-Combinat} community \cite{Sage-Combinat}.

\printbibliography

\end{document}